\documentclass[10pt]{amsart}

\usepackage{graphicx}

%\textwidth 16cm \oddsidemargin -0.01cm
%\evensidemargin -0.01cm
%\addtolength{\textheight}{3cm} \addtolength{\topmargin}{-2cm}

\usepackage{color}

\usepackage[USenglish]{babel}
\usepackage{latexsym}
\usepackage{amsmath}
\usepackage{amsfonts}
\usepackage{amssymb}
\usepackage{esint}
\usepackage[all]{xy}
\renewcommand{\a }{\alpha }
\renewcommand{\b }{\beta }
\renewcommand{\d}{\delta }
\newcommand{\D }{\Delta }

\newcommand{\M}{\mathcal{M}}

\newcommand{\e }{\varepsilon }
\newcommand{\g }{\gamma}

\newcommand{\G }{\Gamma }
\renewcommand{\l }{\lambda }
\renewcommand{\L }{\Lambda }

\newcommand{\n }{\nabla }

\newcommand{\s }{\sigma }

\renewcommand{\O }{\Omega }

\newcommand{\ov}{\overline}

\newcommand{\wtilde }{\widetilde}

\newcommand{\be}{\begin{equation}}
\newcommand{\ee}{\end{equation}}

\newcommand{\R}{\mathbb{R}}
\renewcommand{\S}{\mathbb{S}}

\newcommand{\Z}{\mathbb{Z}}

\newcommand{\N}{\mathbb{N}}

\newcommand\ber{\begin{eqnarray}}\newcommand\bea{\begin{eqnarray}}
\newcommand\eer{\end{eqnarray}}\newcommand\eea{\end{eqnarray}}
\newcommand\berr{\begin{eqnarray*}}
\newcommand\eerr{\end{eqnarray*}}
\newcommand\ba{\begin{array}}
\newcommand\ea{\end{array}}

\newcommand\re{\mathrm{e}}
\newcommand\ri{\mathrm{i}}
\newcommand{\ud}{\mathrm{d}}
\newcommand{\nm}{\nonumber}
\newcommand{\ds}{\displaystyle}

\newtheorem{theorem}{Theorem}[section]
\newtheorem{proposition}[theorem]{Proposition}

\newtheorem{example}[theorem]{Example}

\newcommand{\bpr}{\begin{proposition}}
\newcommand{\epr}{\end{proposition}}
\newcommand{\bex}{\begin{example}\rm}
\newcommand{\eex}{\end{example}}

\begin{document}

\newtheorem{lem}{Lemma}[section]
\newtheorem{pro}[lem]{Proposition}
\newtheorem{thm}[lem]{Theorem}
\newtheorem{rem}[lem]{Remark}
\newtheorem{cor}[lem]{Corollary}
\newtheorem{df}[lem]{Definition}

\title[Non-relativistic Chern-Simons theory]{Existence results for a non-relativistic Chern-Simons model with \\ purely mutual interaction}

\author{Aleks Jevnikar}
\address[Aleks Jevnikar]{Department of Mathematics, Computer Science and Physics,
    University of Udine, Via delle Scienze 206, 33100 Udine, Italy}
\email{aleks.jevnikar@uniud.it}

\author{Sang-Hyuck Moon}
\address[Sang-Hyuck Moon] {Department of Mathematics and Institute of Mathematical Sciences, Pusan National University, Busan 46241, Republic of Korea}
\email{shmoon@pusan.ac.kr}

\begin{abstract}
We are concerned with a skew-symmetric singular Liouville system arising in non-relativistic Chern-Simons theory.
Based on its variational structure, we establish existence and multiplicity results.
Since the energy functional is indefinite, standard variational approaches do not apply directly.
We overcome this difficulty by introducing a suitable constrained problem and implementing a Morse-theoretical argument.
\end{abstract}

\thanks{The authors are grateful to X. Han and G. Tarantello for discussions on the first version of the paper.}

\subjclass[2000]{35J61, 35R01, 35A02, 35B06.}

\maketitle

\noindent {\bf Keywords}: {Non-relativistic Chern-Simons theory, Skew-symmetric system, Indefinite functional, Existence results, Morse theory}

\

\section{Introduction}

\medskip

In recent decades, abelian Chern–Simons gauge theories in $(2+1)$-dimensions have been used to model various planar condensed-matter phenomena, including anyonic excitations and the fractional quantum Hall effect.
In the non-relativistic setting, self-dual vortices arise as finite-energy static configurations of complex scalar fields coupled to Chern–Simons gauge fields. 
In the present paper we are concerned with a class of such equations arising from a non-relativistic $[U(1)]^2$ Chern–Simons model describing two interacting species of matter fields in the purely mutual
interaction regime.

In multi-component Chern–Simons theories, the interaction between different species is encoded in a coupling matrix $K$, and the associated self-dual equations give rise to nonlinear elliptic systems of Liouville type with interaction matrix $K$. A prototypical example is the work of Kim, Lee, Ko, Lee and Min \cite{klkl}, where several Schr\"odinger fields are coupled to $[U(1)]^N$ abelian Chern–Simons gauge fields. For a suitable choice of self-dual potential, the static Bogomol'nyi equations reduce to a coupled system of the form
\[
  \Delta \log |\Psi_p|^2
  = -\sum_{p'} K_{pp'} |\Psi_{p'}|^2 + \cdots,
\]
so that self-dual multi-anyon configurations are described by a generalized Liouville system with interaction matrix $K$. Generalized Liouville systems with a non-negative symmetric coupling matrix have been extensively studied from the PDE point of view; see, for instance, \cite{lz,lz2,LinZhangCritical} and the references therein.

In this paper we focus on the following system:
\be \label{system0}
-\Delta u_i
= \lambda \sum_{j=1}^2 K_{ij} e^{u_j}
  - 4\pi \sum_{j=1}^N \delta_{p_j}
  \quad \text{on } M, \quad i=1,2,
\ee
where $K=(K_{ij})$ is the coupling matrix, $\lambda>0$ is the coupling parameter, $N$ is the total multiplicity of the vortex points $p_1,\dots,p_N$, $M$ is a compact surface without boundary endowed with a Riemannian metric $g$, and $\Delta$ denotes the Laplace–Beltrami operator associated with $g$.

In the present work we are interested in the case where the coupling matrix $K$ has vanishing diagonal entries and nontrivial off-diagonal entries, namely 
\be \label{k}
K=\left(
\begin{array}{cc}
0 & 1 \\
1 & 0
\end{array}
\right).
\ee
From a physical point of view, this corresponds to what Dziarmaga \cite{dzia1} calls “purely mutual statistical interaction”: the effective interaction matrix between the two species has zero self-coupling, so that each component does not interact with itself but interacts only with the other one. In particular, the matrix \eqref{k} arises in the non-relativistic $[U(1)]^2$ Chern–Simons model coupled to two complex
scalar fields considered in \cite{dzia1}.

\medskip

For the reader’s convenience we briefly recall in Section~\ref{sec:self-dual}
how the self-dual reduction of the non-relativistic $[U(1)]^2$ Chern–Simons
model with two complex scalar fields, as considered in \cite{dzia1,klkl},
leads to the elliptic system \eqref{system0}.

\

We will actually consider the following more general problem in mean field formulation  
\be \label{system}
\left\{
\begin{array}{l}
-\Delta u_1= \ds{\rho_2\left(\frac{h_2\re^{u_2}}{\int_M h_2\re^{u_2}\,dV_g}-1\right)-4\pi\sum_{j=1}^{N}\a_j\left(\delta_{p_j}-1\right),}\vspace{0.2cm}\\
-\Delta u_2=\ds{\rho_1\left(\frac{h_1\re^{u_1}}{\int_M h_1\re^{u_1}\,dV_g}-1\right)-4\pi\sum_{j=1}^{N}\a_j\left(\delta_{p_j}-1\right),}
\end{array}
\right.
\ee
where $\rho_1, \rho_2$ are real positive parameters, $h_1, h_2$ smooth positive functions, $\a_j\geq0$ and $dV_g$ is the volume form. For the sake of
simplicity, we are normalizing the total volume of $M$ so that $|M|=1$. 

Observe that when system \eqref{system} is reduced to a single equation we recover the following well-known singular mean field equation 
\be \label{mf}
-\Delta u= \rho\left(\frac{h\re^u}{\int_M h\re^u\,dV_g}-1\right)-4\pi\sum_{j=1}^{N}\a_j\left(\delta_{p_j}(x)-1\right),
\ee
which has been widely studied in literature, see for example \cite{BdMM,bgjm,bjly,bjly2,bjly4,bjl,bl,bt,bt2,bm,clmp2,cama,cl1,cl2,CLin4,cl4,EGP,GM1,Troy}.

Let us now return to system \eqref{system} and discuss its analytical aspects. Since it is invariant by the addition of constants we will restrict ourselves to the subspace of functions with zero average
$$
	\ov H^1(M)= \left\{ u\in H^1(M) \, : \, \int_M u\,dV_g=0 \right\}.
$$
We next desingularize the system \eqref{system} by considering the Green function $G_p(x)$ on $M$ with pole at $p\in M$,
$$
	-\D G_p(x)=\d_p -1, \quad \int_M G_p\,dV_g=0,
$$
and by performing the substitution 
\begin{align}
\begin{split}\label{change}
	&u_i(x)\mapsto u_i(x)+4\pi\sum_{j=1}^{N} G_{p_j}(x), \\
	& h_i(x) \mapsto \wtilde h_i(x)=h_i(x)\re^{-4\pi\sum_{j=1}^{N} \a_jG_{p_j}(x)}, 
\end{split}	
\end{align}
so that \eqref{system} is rewritten as 
\be \label{system2}
\left\{
\begin{array}{l}
-\Delta u_1= \ds{\rho_2\left(\frac{\wtilde h_2\re^{u_2}}{\int_M \wtilde h_2\re^{u_2}\,dV_g}-1\right),}\vspace{0.2cm}\\
-\Delta u_2=\ds{\rho_1\left(\frac{\wtilde h_1\re^{u_1}}{\int_M \wtilde h_1\re^{u_1}\,dV_g}-1\right),}
\end{array}
\right.
\ee
where $\wtilde h_i$ are such that
\begin{align} \label{h}
	& \wtilde h_i> 0 \quad \mbox{on } M\setminus \{p_1,\dots, p_{N}\}, \qquad \wtilde h_i(x) \simeq d(x,p_j)^{2\a_j} \quad \mbox{near } p_j.
\end{align}
The latter problem has a variational structure and the associated Euler-Lagrange functional $\mathcal{J}_\rho :\ov H^1(M)\times \ov H^1(M)\to \R$, $\rho=(\rho_1,\rho_2)$, is given by
\be \label{funct}
	\mathcal{J}_\rho(u_1,u_2)= \int_M \n u_1\cdot \n u_2\,dV_g -\rho_2 \log\int_M \wtilde h_2\re^{u_2}\,dV_g -\rho_1 \log \int_M \wtilde h_1\re^{u_1}\,dV_g.
\ee
Observe that the latter functional is indefinite which makes the problem hard to attack variationally, see for example \cite{br, lpy, yana1, yana2, yana3} and the reference therein. We will overcome this issue by exploiting a suitable constrained problem which will permit us to carry out a Morse-theoretical argument yielding existence and multiplicity results for any underlying surface $M$, see the discussion below.

\

Before stating the main result let us recall what is known about \eqref{system}. The only existence result seems to be derived in \cite{gl, lz}, where the following general problem is considered
\be \label{gen}
	-\D u_i = \sum_{j=1}^n \rho_j a_{ij}\left( \frac{\wtilde h_j \re^{u_j}}{\int_M \wtilde h_j \re^{u_j}\,dV_g} - 1 \right), 	\quad i=1,\dots,n. 
\ee
Here $\rho_i$ are real positive parameters, $a_{ij}$ real numbers and $\wtilde h_i$ are as in \eqref{h}. The associated Euler-Lagrange functional is given by
$$
	\mathcal{G}_\rho(u)=\frac 12 \sum_{i,j=1}^n a^{ij} \int_M \n u_i \cdot \n u_j \,dV_g - \sum_{j=1}^n \rho_j \log \int_M \wtilde h_j \re^{u_j}\,dV_g,
$$
where $(a^{ij})$ is the inverse matrix of $A=(a_{ij})$. In \cite{gl,lz} the coupling matrix $A$ is assumed to satisfy the following hypotheses:

\medskip

\noindent (H1): $A$ is symmetric, nonnegative, irreducible and invertible,

\medskip

\noindent (H2):  $a^{ii}\leq 0$ for all $i=1,\dots,n$, $a^{ij}\geq 0$ for all $i\neq j$ and $\sum_{j=1}^n a^{ij}\geq 0$ for all $i=1,\dots,n$.

\medskip

Here $A$ is called nonnegative if $a_{ij}\geq 0$ for all $i,j=1,\dots,n$ and is called irreducible if there is no $J\subset\{1,\dots,n\}$ such that $a_{ij}=0$ for all $i\in J$ and $j\notin J$. For $n=2$, 
$$
A=\left( \begin{array}{cc}
							a_{11} & a_{12} \\
							a_{12} & a_{22}
				 \end{array}\right)
$$
satisfies (H1) and (H2) if and only if $a_{ij}\geq 0$, $\max\{ a_{11},a_{22} \}\leq a_{12}^2$ and det$A\neq0$. Observe that this is the case for the coupling matrix $K$ in \eqref{k} relative to the system \eqref{system}.

\medskip

The blow-up phenomenon may be quite complicate in general for systems alike \eqref{gen}, see for example \cite{lz2}. The main contribution of \cite{gl,lz} is to show that under the assumptions (H1), (H2) such complexity can be avoided. Indeed, they proved that any blow-up point carries the same local blow-up mass and thus the critical set of parameters associated to \eqref{gen} can be determined as follows. Set
\begin{align*}
\Sigma &= \bigg\{ 8\pi m + \sum_{j\in J}8\pi(1+\a_j) \,:\, m\in\N\cup\{0\}, \, J\subseteq\{1,\dots,N\}  \bigg\}\setminus\{0\} \\
			&=\bigg\{8\pi n_k \,:\, n_1<n_2<\dots\bigg\},
\end{align*}
which corresponds to the critical set of the standard Liouville equation \eqref{mf}. Observe that if either $\a_j=0$ or $\a_j\in\N$ for all $j$ then we simply have $\Sigma=8\pi\N$. It turns out that the critical set for the system \eqref{gen} can be expressed as
$$
	\G = \left\{ (\rho_1,\dots,\rho_n)\in\R^n_+ \,:\, \sum_{i,j} a_{ij}\rho_i\rho_j=8\pi n_k \sum_{i} \rho_i \quad \mbox{ for some } k\in\N \right\}.
$$
For example, the critical set associated to our problem \eqref{system} is given by
\be \label{L}
	\L = \left\{ (\rho_1,\rho_2)\in\R^2_+ \,:\, \dfrac{\rho_1\rho_2}{\rho_1+\rho_2}=4\pi n_k \quad \mbox{ for some } k\in\N \right\},
\ee 
see Figure \ref{fig:L} for the case $\a_j=0$ or $\a_j\in\N$ for all $j$.

\begin{figure}[h]
\centering
\includegraphics[width=0.6\linewidth]{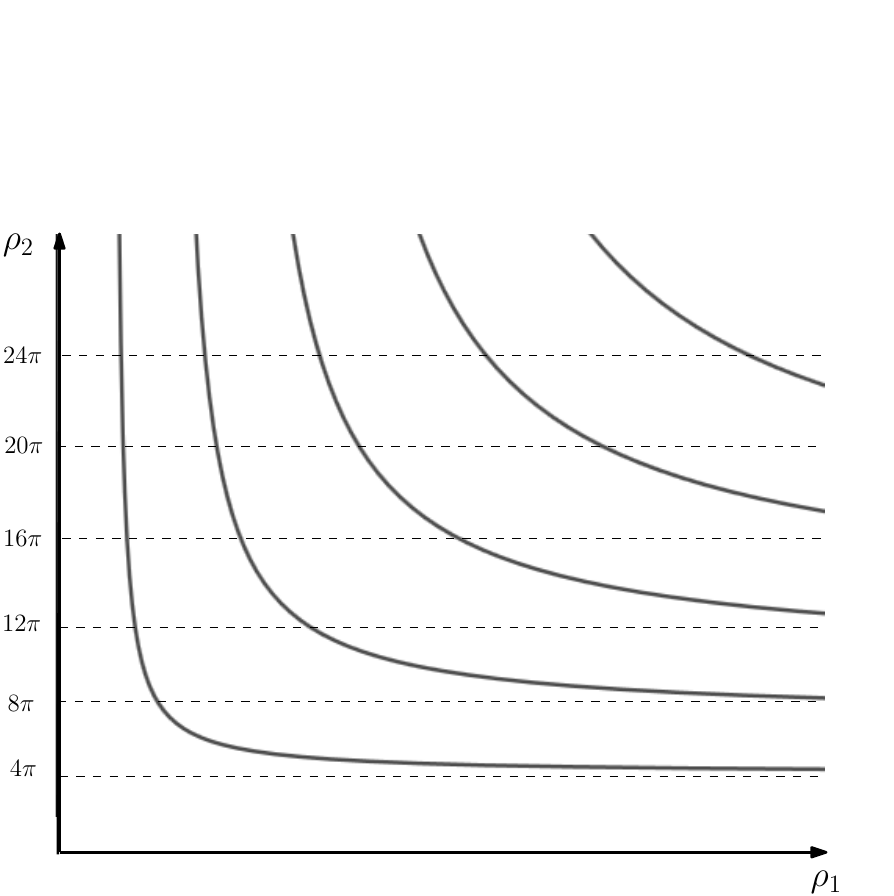}
\caption{}
\label{fig:L}
\end{figure}

Once the critical set is known, the authors in \cite{gl,lz} derive the Leray-Schauder degree of \eqref{gen} by deforming it into a single equation alike \eqref{mf} for which the degree is well-known. We collect these results in the following theorem.

\

\noindent \textbf{Theorem A} (\cite{gl,lz})\textbf{.} \emph{Let $A=(a_{ij})$ satisfy (H1) and (H2). Then, we have:}

\medskip

\noindent \textbf{1.} \emph{If $(\rho_1,\dots,\rho_n)\in K$, where $K\subset \R^2\setminus\G$ is a compact set, then there exists a constant $C>0$ such that any solution $(u_1,\dots,u_n)$ to \eqref{gen} satisfies
$$
|u_i(x)|\leq C \quad i=1,\dots,n, \quad x\in M.
$$} 

\noindent \textbf{2.} \emph{Let $d_k$ be the Leray-Schauder degree associated to \eqref{gen} for 
$$
8\pi n_k\sum_{i}\rho_i<\sum_{i,j} a_{ij}\rho_i\rho_j <8\pi n_{k+1}\sum_{i}\rho_i.
$$
Then, $d_k$ coincides with the degree associated to \eqref{mf} for $8\pi n_k<\rho<8\pi n_{k+1}$. In particular, if $(\rho_1,\dots,\rho_n)\notin\G$, all $\a_j\in\N$ and $M$ has positive genus, then $d_k>0$ and \eqref{gen} has a solution.}

\

On the other hand, we are not aware of any result concerning either $M=\S^2$ or $\a_j\notin\N$. The aim of this paper is to give the following existence result in this direction, complementing it with a multiplicity result. 

\smallskip

\begin{thm} \label{thm1}
Suppose $(\rho_1,\rho_2)\notin\L$ and that
\begin{align*}
 \rho_1,\rho_2>8k\pi, \quad \frac{\rho_1+\rho_2}{2} < 8(k+1)\pi.
\end{align*}
Then, we have:

\medskip

\noindent \textbf{1.} If $M=\S^2$ and all $\a_j=0$, then \eqref{system} has a solution.

\medskip

\noindent \textbf{2.} If $M$ has positive genus $\emph{\textbf{g}}>0$ and $\a_j\geq0$, then for a generic choice of the metric $g$ and of the functions $h_1,h_2$ it holds
$$
 \#\bigr\{ \mbox{solutions of \eqref{system}} \bigr\} \geq \binom{k+\emph{\textbf{g}}-1}{\emph{\textbf{g}}-1}. 
$$
\end{thm}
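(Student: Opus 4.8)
The plan is to cure the indefiniteness of $\mathcal{J}_\rho$ in \eqref{funct} by a saddle-point reduction that collapses the system onto a single Liouville-type functional, and then to run a min-max / Morse scheme on the reduced functional, with compactness provided by the a priori bounds of Theorem~A. First I would diagonalize the quadratic form $\int_M\n u_1\cdot\n u_2\,dV_g$ through the change of variables $u_1=v+w$, $u_2=v-w$, which turns \eqref{funct} into
\[
\mathcal{J}_\rho(v+w,\,v-w)=\int_M|\n v|^2\,dV_g-\int_M|\n w|^2\,dV_g-\rho_1\log\int_M\wtilde h_1\re^{v+w}\,dV_g-\rho_2\log\int_M\wtilde h_2\re^{v-w}\,dV_g.
\]
For fixed $v$ the map $w\mapsto\mathcal{J}_\rho(v+w,v-w)$ is strictly concave --- the leading term $-\int_M|\n w|^2\,dV_g$ is negative definite and both logarithmic terms are concave in $w$ --- and it tends to $-\infty$ as $\|w\|_{H^1}\to\infty$ (the $-\log$ terms are bounded below by Jensen), so it admits a unique smooth maximizer $w=w(v)$. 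Setting $\mathcal{I}_\rho(v)=\mathcal{J}_\rho(v+w(v),\,v-w(v))$ on $\ov H^1(M)$ and using $\partial_w\mathcal{J}_\rho=0$ at $w(v)$, the critical points of $\mathcal{I}_\rho$ correspond exactly to those of $\mathcal{J}_\rho$, hence (after undoing \eqref{change}) to the solutions of \eqref{system}. This is the constrained, definite problem referred to in the abstract.

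Next I would analyze $\mathcal{I}_\rho$ as a genuine single mean-field functional. The central input is a Moser--Trudinger inequality for $\mathcal{I}_\rho$ and its localized, improved version: once $w$ is eliminated the effective coupling is $\tau:=\frac{\rho_1\rho_2}{\rho_1+\rho_2}$, and each concentration point of $v$ carries effective mass $4\pi$ (this factor $\tfrac12$ is the reason why the critical set \eqref{L} features $4\pi n_k$ rather than $8\pi n_k$), so that $\mathcal{I}_\rho$ is coercive for $\tau<4\pi$, while for $\tau\in(4k\pi,4(k+1)\pi)$ its very negative sublevels $\{\mathcal{I}_\rho\leq-L\}$ are homotopy equivalent to the space $M_k(M)$ of formal barycenters $\sum_{i=1}^k t_i\delta_{x_i}$. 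The hypotheses $\rho_1,\rho_2>8k\pi$ and $\frac{\rho_1+\rho_2}{2}<8(k+1)\pi$ are exactly what place $\tau$ in this window: $\rho_1,\rho_2>8k\pi$ gives $\frac1{\rho_1}+\frac1{\rho_2}<\frac1{4k\pi}$, i.e. $\tau>4k\pi$, whereas $\tau\leq\frac{\rho_1+\rho_2}{4}<4(k+1)\pi$. When $\alpha_j\geq0$ the same window forbids concentration at the singular points $p_j$, whose effective cost $4\pi(1+\alpha_j)$ exceeds $4\pi$, so no extra topology is created there. Finally, the assumption $\rho\notin\L$ together with Theorem~A.1 yields a uniform a priori bound, hence the compactness needed to convert changes of topology of the sublevels into actual critical points.

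With this in place, the two statements follow from standard topological methods. For part~1 ($M=\S^2$, all $\alpha_j=0$), the barycenter space $M_k(\S^2)$ is non-contractible for every $k\geq1$, so a min-max construction modeled on $M_k(\S^2)$ produces a critical value of $\mathcal{I}_\rho$ at which, by the compactness above, a solution exists; this is precisely the regime in which the Leray--Schauder degree may vanish, which is why a min-max and not a degree argument is needed here. For part~2 ($\mathbf{g}>0$, $\alpha_j\geq0$), I would first achieve non-degeneracy of all solutions by a Sard--Smale transversality argument, perturbing $g$ and $h_1,h_2$ within a residual set; the weak Morse inequalities for $\mathcal{I}_\rho$ then bound the number of critical points from below by the total rank of the homology describing the change of topology between low and high sublevels, and the computation of $H_*(M_k(M))$ for a genus-$\mathbf{g}$ surface supplies the Betti number $\binom{k+\mathbf{g}-1}{\mathbf{g}-1}$.

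I expect the main obstacle to be the global analysis of the reduced functional, namely establishing the improved Moser--Trudinger inequality and the homotopy equivalence $\{\mathcal{I}_\rho\leq-L\}\simeq M_k(M)$ for the saddle functional $\mathcal{I}_\rho$: the elimination of $w$ couples the two components, and one must verify both that the blow-up behaviour of $v$ is still governed by barycenters of effective mass $4\pi$ and that the $w$-maximization preserves the Palais--Smale / deformation properties underpinning the min-max and Morse arguments. Since the a priori estimates are already furnished by Theorem~A, the entire novelty lies in rendering the indefinite, coupled variational problem accessible to the barycenter/Morse machinery.
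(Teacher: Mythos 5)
Your reduction coincides with the paper's: setting $u_1=F-G$, $u_2=F+G$, the paper's constrained functional $\widetilde J_\rho(F)=\int_M|\nabla F|^2\,dV_g-\min_G I_\rho(F,G)$ is exactly your $\mathcal I_\rho$ (your maximization in $w$ is the paper's minimization of the convex, coercive $I_\rho(F,\cdot)$), and the broad scheme --- improved Moser--Trudinger, barycenter maps $\Psi,\Phi$, a Lucia-type deformation replacing Palais--Smale, contractible high sublevels, Sard--Smale genericity plus weak Morse inequalities --- is also the paper's. But your quantitative mechanism is not: nothing in the proof reduces matters to the single effective parameter $\tau=\rho_1\rho_2/(\rho_1+\rho_2)$ with mass quantum $4\pi$. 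The paper works with two asymmetric one-sided comparisons: taking $G=0$ in the minimization gives $\widetilde J_\rho(F)\ge\int_M|\nabla F|^2\,dV_g-(\rho_1+\rho_2)\log\int_M\widehat h\,\re^F\,dV_g-C$ (Lemma \ref{lem:est1}), which combined with the improved inequality on $k+1$ separated regions uses precisely $\frac{\rho_1+\rho_2}{2}<8(k+1)\pi$; while discarding $-\int_M|\nabla\widetilde G|^2\,dV_g$ and applying H\"older gives $\widetilde J_\rho(F)\le\int_M|\nabla F|^2\,dV_g-2\min(\rho_1,\rho_2)\log\int_M\widehat h\,\re^F\,dV_g+C$ (Lemma \ref{lem:est2}), which on the bubbles \eqref{test} uses precisely $\min(\rho_1,\rho_2)>8k\pi$. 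Your assertion that the hypotheses are ``exactly'' $\tau\in(4k\pi,4(k+1)\pi)$ is only a one-way implication: the window is strictly weaker (e.g.\ $\rho_1$ slightly below $8k\pi$ with $\rho_2$ large keeps $\tau$ in the window), and if the argument truly needed only the window the theorem's hypotheses would be needlessly strong; your accounting thus hides where each hypothesis actually enters. Relatedly, the paper never proves a homotopy equivalence $\widetilde J_\rho^{-L}\simeq M_k$; it only needs $\Psi\circ\Phi$ homotopic to the identity, hence an injection of the homology of the barycenters into that of the low sublevel.

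The genuine gap is your part \textbf{2}. Since $\widehat h(x)\simeq d(x,p_j)^{2\alpha_j}$ vanishes at the singular points, the test-function estimate $\log\int_M\widehat h\,\re^{\varphi_{\lambda,\sigma}-\overline{\varphi}_{\lambda,\sigma}}\,dV_g=2(1+o_\lambda(1))\log\lambda$ holds only for $\sigma$ supported in a compact $K\subset M\setminus\{p_1,\dots,p_N\}$ (Lemma \ref{lem:test}); bubbles centered at a point $p_j$ with $\alpha_j>0$ do not reach arbitrarily low levels, so $\Phi$ cannot be defined on all of $M_k(M)$, and the heuristic ``no extra topology is created at the $p_j$'' does not substitute for an actual pair of maps with $\Psi\circ\Phi\simeq\mathrm{id}$. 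The paper circumvents this by retracting $M$ onto a bouquet $\mathcal B_{\mathbf g}$ of $\mathbf g$ circles disjoint from $\{p_1,\dots,p_N\}$: the composition $\Pi\circ\Psi:\widetilde J_\rho^{-L}\to(\mathcal B_{\mathbf g})_k$ together with $\Phi:(\mathcal B_{\mathbf g})_k\to\widetilde J_\rho^{-L}$ yields the homology injection, and the bound $\binom{k+\mathbf g-1}{\mathbf g-1}$ is $\beta_{2k-1}\bigl((\mathcal B_{\mathbf g})_k\bigr)$ as computed in \cite{BdMM}; it is not a Betti number of $M_k(M)$, so your final step ``the computation of $H_*(M_k(M))$ supplies the Betti number'' is unsupported as stated. (For part \textbf{1} your min-max phrasing is harmless: the paper concludes instead from contractibility of $\widetilde J_\rho^{L}$, Proposition \ref{pro:contr}, together with the deformation Lemma \ref{lem:def}, which amounts to the same.)
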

Here, by generic choice of $(g,h_1,h_2)$ we mean that it can be taken in an open dense subset of $\M^2 \times C^{\,2}(M)\times C^{\,2}(M)$, where $\M^2$ stands for the space of Riemannian metrics on $M$ equipped with the $C^{\,2}$ norm.  

\medskip

%The proof is based on the variational structure of the problem. As already observed, we handle the indefiniteness of the energy functional \eqref{funct} by introducing a suitable constrained problem, see \eqref{funct2}. A similar approach has been used in \cite{lpy,lp} for treating a relativistic Chern-Simons model. We will then run a Morse-theoretical argument on the constrained problem by looking at topological changes in the structure of sublevels of the constrained functional $\wtilde{J}_\rho$. More precisely, we first show that high sublevels have trivial topology. Then, for the case $M=\S^2$ and $\a_j=0$, we will prove that low sublevels are homotopically equivalent to some formal barycenters of $\S^2$ (similarly to \cite{dj}), which are not contractible thus yielding the first part of Theorem \ref{thm1}. On the other hand, for $M$ with positive genus and $\a_j\geq0$, we need to avoid the non-trivial effect of the singular points $p_j$. This is done in the spirit of \cite{BdMM,BJMR}, by first retracting $M$ on a curve not intersecting those points and then showing that formal barycenters of such curve embed in low subleves of $\wtilde{J}_\rho$, yielding again the second part of Theorem \ref{thm1}.

The proof is based on the variational structure of the problem. As already observed, we handle the indefiniteness of the energy functional \eqref{funct} by introducing a suitable constrained problem, see \eqref{funct2}. In particular, the purely mutual interaction encoded by the off-diagonal matrix $K$ gives \eqref{system0} a variational structure (skew-symmetric after a transformation) that is closely related to that of the Chern–Simons–Higgs systems studied in \cite{lpy,lp}. In our setting, the constrained functional $\widetilde{J}_\rho$ plays the role of the reduced functional used in those works. We will then run a Morse-theoretical argument on the constrained problem by looking at topological changes in the structure of sublevels of $\wtilde{J}_\rho$. More precisely, we first show that high sublevels have trivial topology. Then, for the case $M=\S^2$ and $\alpha_j=0$, we will prove that low sublevels are homotopically equivalent to some formal barycenters of $\S^2$ (similarly to \cite{dj}), which are not contractible, thus yielding the first part of Theorem~\ref{thm1}. On the other hand, for $M$ with positive genus and $\alpha_j\geq0$, we need to avoid the non-trivial effect of the singular points $p_j$. This is done in the spirit of \cite{BdMM,BJMR}, by first retracting $M$ onto a curve not intersecting those points and then showing that formal barycenters of such a curve embed in low sublevels of $\wtilde{J}_\rho$, yielding the second part of Theorem~\ref{thm1}.

\medskip

The paper is organized as follows. In section \ref{sec:self-dual} we review the Chern-Simons model proposed in \cite{dzia1, klkl}, obtain the self-dual equations and derive the singular Liouville system \eqref{system}, in section \ref{sec:existence} we prove the main existence result, Theorem \ref{thm1}.

\

\section{Self-dual equations} \label{sec:self-dual}

%\medskip

%\begin{center}
%\emph{?..add more details in this section..?}
%\end{center}

%\medskip

In this section we review the non-relativistic Chern-Simons model proposed in \cite{dzia1, klkl}, obtain the self-dual equations and derive the singular Liouville system \eqref{system}. The Lagrangian density of this model is given by
$$
\mathcal{L}=\kappa\epsilon^{\mu\nu\alpha}A_\mu\partial_\nu \hat{A}_\alpha+\ri \overline{\phi}D_0\phi+\ri \overline{\psi}D_0\psi
-|D_k\phi|^2-|D_k\psi|^2+g|\phi|^2|\psi|^2, 
$$
where the covariant derivatives are defined by
$$
 D_\mu\phi=\partial_\mu\phi-\ri A_\mu\phi,\quad D_\mu\psi=\partial_\mu\psi-\ri \hat{A}_\mu\psi
$$
and $\mu,\nu,\alpha \in \{0,1,2\}$, $k\in\{1,2\}$, while
$\epsilon^{\mu\nu\alpha}$ is the totally antisymmetric tensor with
$\epsilon^{012}=1$. 
The constant $\kappa>0$ is the Chern–Simons coupling constant. 
 The time-derivative terms here and in \cite{dzia1} differ only by a total time derivative, so the Euler–Lagrange equations in the interior coincide, and in particular the resulting self-dual static system \eqref{system} is unchanged.

%The gauge fields $A_\mu$ and $\hat{A}_\mu$ satisfy the Gauss laws
%and $\kappa>0$ is the coupling constant which satisfies the Gauss laws
Varying $\mathcal{L}$ with respect to $A_0$ and $\hat{A}_0$ yields the Gauss laws
\begin{align} \label{gauss}
%\begin{split}
 \kappa F_{12}=-|\psi|^2, \qquad
 \kappa \hat{F}_{12}=-|\phi|^2,
%\end{split}
\end{align}
where
\[
  F_{\mu\nu} = \partial_\mu A_\nu - \partial_\nu A_\mu,
  \qquad
  \hat{F}_{\mu\nu} = \partial_\mu \hat{A}_\nu - \partial_\nu \hat{A}_\mu,
  \qquad \mu,\nu = 0,1,2.
\]
%denote the magnetic fields associated with $A_\mu$ and $\hat{A}_\mu$.
Varying the action with respect to $\phi$, $\psi$ and the spatial components of $A_\mu$, $\hat{A}_\mu$, we obtain the following equations of motion: 
\begin{equation*}\begin{aligned}
\ri D_0\phi+ D_j^2\phi+g|\psi|^2\phi=0, &\qquad \ri D_0\psi+ D_j^2\psi+g|\phi|^2\psi=0,\\
 \kappa F_{j0}=-\epsilon^{0jk}\hat{J}^k, & \qquad \kappa \hat{F}_{j0}=-\epsilon^{0jk}J^k, \qquad j=1,2,
\end{aligned}\end{equation*}
where the current densities  $J^k, \hat{J}^k$ are defined by
\[
  J^k=\ri(\phi \overline{D^k\phi}-\overline{\phi} D^k\phi), \quad \hat{J}^k=\ri(\psi \overline{D^k\psi}-\overline{\psi} D^k\psi).
\]
In view of the Gauss laws  \eqref{gauss} and the following identities
\ber
 |D_k\phi|^2&=&|D_1\phi\pm\ri D_2\phi|^2\mp\ri\left(\partial_1\big[\phi\overline{D_2\phi}\big]-\partial_2\big[\phi\overline{D_1\phi}\big]\right)\pm F_{12}|\phi|^2,\nm\\
  |D_k\psi|^2&=&|D_1\psi\pm\ri D_2\psi|^2\mp\ri\left(\partial_1\big[\psi\overline{D_2\psi}\big]-\partial_2\big[\psi\overline{D_1\psi}\big]\right)\pm \hat{F}_{12}|\psi|^2,\nm
 \eer
 the static Hamiltonian density can be written as
\ber
\mathcal{H}&=&-\mathcal{L} \quad \text{(up to a total divergence)} \nm\\
 &=&-\kappa A_0\hat{F}_{12}-\kappa \hat{A}_0F_{12}-A_0|\phi|^2-\hat{A}_0|\psi|^2+|D_k\phi|^2+|D_k\psi|^2- g|\phi|^2|\psi|^2\nm\\
 &=&|D_k\phi|^2+|D_k\psi|^2-g|\phi|^2|\psi|^2\nm\\
 &=&|D_1\phi\pm\ri D_2\phi|^2
 +|D_1\psi\pm\ri D_2\psi|^2 \mp \ri\left(\partial_1\big[\phi\overline{D_2\phi}\big]-\partial_2\big[\phi\overline{D_1\phi}\big]\right)\nm\\
 && \mp \ri\left(\partial_1\big[\psi\overline{D_2\psi}\big]-\partial_2\big[\psi\overline{D_1\psi}\big]\right)
 \pm F_{12}|\phi|^2\pm \hat{F}_{12}|\psi|^2 - g|\phi|^2|\psi|^2\nm\\
  &=&|D_1\phi\pm\ri D_2\phi|^2
 +|D_1\psi\pm\ri D_2\psi|^2 \mp \ri\left(\partial_1\big[\phi\overline{D_2\phi}\big]-\partial_2\big[\phi\overline{D_1\phi}\big]\right)\nm\\
 &&\mp \ri\left(\partial_1\big[\psi\overline{D_2\psi}\big]-\partial_2\big[\psi\overline{D_1\psi}\big]\right)
 -\left(g\pm\frac2\kappa\right)|\phi|^2|\psi|^2. \nm
 \eer

At the critical coupling $g=\mp\frac2\kappa$ the energy  satisfies
$$
E=\int\mathcal{ H}\ud x\ge 0
$$
and the minimum is attained only if there hold the following self-dual equations
\be \label{self-dual}
\left\{
\ba{l}
 D_1\phi\pm\ri D_2\phi=0,\\
 D_1\psi\pm\ri D_2\psi=0,
\ea
\right.
\ee
together with the Gauss laws \eqref{gauss}. It follows from \eqref{self-dual} that the zeros of $\phi$ and $\psi$ are discrete and finite, which we suppose to coincide and we denote $\{p_1, \dots, p_{N}\}$. 
Let $u_1=\ln|\phi|^2$ and $u_2=\ln|\psi|^2$. We transform the system \eqref{gauss} and \eqref{self-dual} into
$$
\left\{
\ba{l}
-\Delta u_1=\ds{\mp\frac2\kappa\re^{u_2}-4\pi\sum_{j=1}^{N}\delta_{p_j}(x),} \vspace{0.2cm}\\
-\Delta u_2=\ds{\mp\frac2\kappa\re^{u_1}-4\pi\sum_{j=1}^{N}\delta_{p_j}(x),} 
\ea
\right.
$$
which is solvable only for the choice $\mp\frac2\kappa=\frac2\kappa$. Therefore, we consider 
$$
\left\{
\ba{l}
-\Delta u_1=\ds{\lambda\re^{u_2}-4\pi\sum_{j=1}^{N}\delta_{p_j}(x),} \vspace{0.2cm}\\
-\Delta u_2=\ds{\lambda\re^{u_1}-4\pi\sum_{j=1}^{N}\delta_{p_j}(x),} 
\ea
\right.
$$
where $\lambda\equiv \frac{2}{\kappa}>0.$ Actually, we will study a more general version of the above system on a compact surface, namely \eqref{system}. For more details concerning the relation between Liouville type equations and self-dual equations, we refer the interested readers to \cite{dunne, tarantello}.

\

\section{Proof of the main results} \label{sec:existence}

\medskip

In this section we derive the main existence and multiplicity results of Theorem \ref{thm1} by making use of a variational approach. We will first introduce a suitable constrained problem and then run the Morse-theoretical argument, treating separately the zero and positive genus cases.

\medskip

\subsection{A constrained problem} 

In this subsection, we introduce the constrained problem and discuss its properties. To this end, we first consider a change of variables in order to decouple the mixed term in the Euler-Lagrange functional $\mathcal{J}_\rho$. Let $F, G\in\ov H^1(M)$ and set
\begin{align*}
\left\{
\begin{array}{l}
u_1=F-G,\\
u_2=F+G.
\end{array}
\right.	
\end{align*}
Then, in the new variables, \eqref{funct} takes the form
\begin{align*}
\mathcal{J}_\rho(u_1,u_2)=J_\rho(F,G)=& \int_M |\n F|^2 \,dV_g - \int_M |\n G|^2 \,dV_g  \\
																 &-\rho_2 \log\int_M \wtilde h_2\re^{F+G}\,dV_g -\rho_1 \log \int_M  \wtilde h_1\re^{F-G}\,dV_g.
\end{align*}
The idea is now to rewrite it as
$$
J_\rho(F,G)= \int_M |\n F|^2 \,dV_g -I_\rho(F,G),
$$
where
\be \label{I}
I_\rho(F,G)= \int_M |\n G|^2 \,dV_g+\rho_2 \log\int_M \wtilde h_2\re^{F+G}\,dV_g +\rho_1 \log \int_M \wtilde h_1\re^{F-G}\,dV_g
\ee
and consider the constrained functional
\be \label{funct2}
	\wtilde J_\rho(F)=\int_M |\n F|^2 \,dV_g -I_\rho\bigr(F,\wtilde G(F)\bigr),
\ee
where $\wtilde G(F)$ is defined by
\be \label{min}
 I_\rho(F,\wtilde G(F)) = \min \left\{ I_\rho(F,G) \, : \, G\in \ov H^1(M) \right\}. 
\ee
We point out that a similar argument has been already used in \cite{lpy, lp} where a relativistic Chern-Simons model was studied. The definition of \eqref{funct2} is well-posed, indeed we have the following easy lemma.

\smallskip

\begin{lem}
For any $F\in\ov H^1(M)$ fixed there is a unique $G=\wtilde G(F)\in\ov H^1(M)$ satisfying \eqref{min}.
Moreover, the minimizer $\wtilde G(F)$ is $C^1$ with respect to $F$.
\end{lem}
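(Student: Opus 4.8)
The plan is to prove existence, uniqueness, and $C^1$-dependence for the inner minimization problem \eqref{min} by a direct variational argument combined with the implicit function theorem. For fixed $F\in\ov H^1(M)$, I would first examine the map $G\mapsto I_\rho(F,G)$ defined in \eqref{I} and show it is strictly convex and coercive on $\ov H^1(M)$. The quadratic term $\int_M|\n G|^2\,dV_g$ is strictly convex in $G$, while the two logarithmic terms $\rho_2\log\int_M\wtilde h_2\re^{F+G}\,dV_g$ and $\rho_1\log\int_M\wtilde h_1\re^{F-G}\,dV_g$ are each convex in $G$ by Jensen's inequality (the log-integral functional $v\mapsto\log\int\wtilde h\,\re^{v}$ is convex, and $G\mapsto F\pm G$ is affine). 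Strict convexity therefore comes entirely from the Dirichlet term, which is enough because the only directions in which it fails to be strictly convex are constants, and we have restricted to the zero-average space $\ov H^1(M)$ where the constants are excluded.

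For coercivity and hence existence, the main point I would address is to bound the logarithmic terms from below as $\|G\|_{H^1}\to\infty$. Using the Moser--Trudinger inequality on $M$ one controls $\log\int_M\wtilde h_i\re^{\pm G+(\text{terms in }F)}\,dV_g$ from below by an expression of the form $\int_M(\pm G)\,dV_g - C\|G\|_{H^1}^2$ up to constants depending on $F$ and $\rho_i$; since $G$ has zero average the linear term drops, and more importantly the Moser--Trudinger constant can be chosen so that the resulting negative quadratic contribution is strictly dominated by the positive Dirichlet term $\int_M|\n G|^2\,dV_g$, giving coercivity. Weak lower semicontinuity of $I_\rho(F,\cdot)$ is standard (the Dirichlet term is weakly lower semicontinuous and the exponential terms pass to the limit by compact Sobolev embedding into every $L^q$), so the direct method yields a minimizer $\wtilde G(F)$, and strict convexity makes it unique. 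The minimizer is characterized by the Euler--Lagrange equation obtained by setting $\p_G I_\rho(F,\wtilde G(F))=0$, namely
\be
-2\Delta G = \rho_2\,\frac{\wtilde h_2\re^{F+G}}{\int_M \wtilde h_2\re^{F+G}\,dV_g} - \rho_1\,\frac{\wtilde h_1\re^{F-G}}{\int_M \wtilde h_1\re^{F-G}\,dV_g}
\ee
in $\ov H^1(M)$.

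For the $C^1$-dependence of $\wtilde G$ on $F$, I would apply the implicit function theorem to the map $\Phi(F,G)=\p_G I_\rho(F,G)$ viewed as a map $\ov H^1(M)\times\ov H^1(M)\to(\ov H^1(M))^\ast$. The map $\Phi$ is $C^1$ (the Fr\'echet differentiability of the nonlinear exponential terms follows from the smoothness of $\re^{(\cdot)}$ together with Moser--Trudinger bounds guaranteeing that all the relevant integrals are finite and depend differentiably on the arguments). The key hypothesis to verify is that the partial derivative $\p_G\Phi(F,\wtilde G(F))$, which is the second variation of $I_\rho$ in $G$, is an invertible operator from $\ov H^1(M)$ to its dual; this is exactly the statement that the strictly convex functional $I_\rho(F,\cdot)$ has a nondegenerate, coercive Hessian at its minimizer, which follows from the same strict-convexity estimate used for existence (the Hessian is the Dirichlet form plus a bounded nonnegative perturbation, hence coercive on the zero-average space). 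The implicit function theorem then gives that $F\mapsto\wtilde G(F)$ is $C^1$.

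The main obstacle I expect is the coercivity step: one must ensure that the negative quadratic contribution coming from the Moser--Trudinger estimate on the logarithmic terms is genuinely controlled by the positive Dirichlet energy $\int_M|\n G|^2\,dV_g$, rather than merely comparable to it. In the single-equation Liouville setting the sharp Moser--Trudinger constant is $8\pi$, and here the fact that $G$ enters the two logarithmic terms with opposite signs ($F+G$ and $F-G$) is what prevents the two exponential contributions from conspiring against the single Dirichlet term; making this cancellation quantitatively precise, uniformly for $F$ in bounded sets, is the delicate part. Everything else --- convexity, weak lower semicontinuity, and the implicit function theorem --- is routine once this coercivity is secured.
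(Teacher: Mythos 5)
Your overall architecture --- direct method plus strict convexity for existence and uniqueness, then the implicit function theorem with an invertible second variation for the $C^1$ dependence --- is the same as the paper's. But your coercivity step contains a genuine error of mechanism. The Moser--Trudinger inequality bounds $\log\int_M \wtilde h\,\re^{v}\,dV_g$ from \emph{above} by a multiple of $\int_M|\n v|^2\,dV_g$ plus a constant; it does not produce the lower bound with a negative quadratic term $-C\|G\|_{H^1}^2$ that you invoke, and its constant is fixed, not something one can ``choose''. Moreover, if the logarithmic terms really did contribute a term $-C\|G\|_{H^1}^2$ with Moser--Trudinger-type constants, the combined coefficient would be of order $\frac{\rho_1+\rho_2}{16\pi}$, which exceeds $1$ exactly in the regime of Theorem \ref{thm1} (where $\rho_1,\rho_2>8k\pi$), so the domination argument you sketch would collapse precisely where it is needed. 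The correct --- and much simpler --- observation, which is what the paper uses, is that both logarithmic terms enter $I_\rho$ in \eqref{I} with \emph{positive} coefficients $\rho_1,\rho_2>0$, so for coercivity one only needs them bounded \emph{below}, and Jensen's inequality gives this for free: since $|M|=1$ and $F,G\in\ov H^1(M)$ have zero average, $\log\int_M \wtilde h_i\,\re^{F\pm G}\,dV_g \ge \int_M \log \wtilde h_i\,dV_g$, a finite constant because $\log\wtilde h_i\in L^1(M)$ (the singularities of $\wtilde h_i$ are of type $d(x,p_j)^{2\a_j}$ with $\a_j\ge 0$). Hence $I_\rho(F,G)\ge \int_M|\n G|^2\,dV_g - C$ with $C$ independent of $G$ (indeed of $F$ as well), and no cancellation between the $F+G$ and $F-G$ terms is needed; the ``delicate part'' you flag does not exist.

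Two smaller remarks. Your Euler--Lagrange equation has the signs reversed: since the $F+G$ term appears with a plus sign inside $I_\rho$ and test functions have zero mean, minimizing \eqref{min} gives $-2\Delta G = \rho_1\bigl(\frac{\wtilde h_1\re^{F-G}}{\int_M \wtilde h_1\re^{F-G}\,dV_g}-1\bigr) - \rho_2\bigl(\frac{\wtilde h_2\re^{F+G}}{\int_M \wtilde h_2\re^{F+G}\,dV_g}-1\bigr)$, which is consistent with subtracting the two equations of \eqref{system2}; your version, besides the sign flip, lacks the constants making the right-hand side mean-free. For the $C^1$ step your plan does match the paper: the second variation of $I_\rho(F,\cdot)$ is the Dirichlet form plus two variance-type nonnegative terms (nonnegative by Cauchy--Schwarz), hence bounded below by $\int_M|\n\phi|^2\,dV_g$, and Lax--Milgram together with the implicit function theorem concludes. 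Once the coercivity step is replaced by the Jensen--Poincar\'e argument above, your proof aligns with the paper's.
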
 

\begin{proof}
Indeed, by the Moser-Trudinger inequality $I_\rho(F,\cdot)$ is weakly lower semicontinuous in $\ov H^1(M)$. Moreover, since $F, G\in \ov H^1(M)$ we have by the Jensen and Poincar\'e inequalities that
$$
	I_\rho(F,G) \geq \int_M |\n G|^2 \,dV_g -C,
$$ 
where $C>0$ depends only on $\wtilde h_1, \wtilde h_2$, $\rho_1, \rho_2$ and $M$. Since $\int_M |\n \cdot|^2 \,dV_g$ induces a norm on $\ov H^1(M)$, we conclude that $I_\rho(F,\cdot)$ is coercive. It follows that it has a global minimum which is unique once we observe that $I_\rho(F,\cdot)$ is convex. 
Indeed, it holds that
\begin{align*}
D_{GG}I_\rho(F,G)[\phi,\phi]=&\int_M |\nabla \phi|^2 dV_g  \\
&+ \rho_2 \frac{\int_M \tilde h_2 e^{F+G}\phi^2 dV_g \int_M \tilde h_2 e^{F+G} dV_g - (\int_M \tilde h_2 e^{F+G}\phi dV_g)^2}{(\int_M \tilde h_2 e^{F+G}dV_g)^2}\\
&+\rho_1 \frac{\int_M \tilde h_1 e^{F-G}\phi^2 dV_g \int_M \tilde h_1 e^{F-G} dV_g - (\int_M \tilde h_1 e^{F-G}\phi dV_g)^2}{(\int_M \tilde h_1 e^{F-G}dV_g)^2} \\
\ge & \int_M |\nabla \phi|^2 dV_g,
\end{align*}
and $I_\rho(F,\cdot)$ is convex.
Moreover, by Lax-Milgram theorem, we have $D_{GG}I_\rho(F,\wtilde G):\ov H^1(M) \to (\ov H^1(M))^*$ is invertible and its inverse is bounded.
Thus, by the implicit function theorem, $\widetilde G(F)$ is continuously differentiable with respect to $F$.
\end{proof}

\medskip

It is easy to check that a critical point of $\wtilde J_\rho$ gives rise to a solution of the original problem \eqref{system}. Thus, from now on, we focus on the latter functional trying to detect a change of topology between its sublevels. We will use the notation
$$
\wtilde J_\rho^a =\{F\in\ov H^1(M) \,:\, \wtilde J_\rho(F)\leq a\}.
$$ 
The starting point is the following compactness result, which is already part of Theorem A.

\smallskip

\begin{pro} \emph{(\cite{bl,lz}).} \label{pro:comp}
Let $\L$ be as in \eqref{L} and suppose $(\rho_1,\rho_2)\notin\L$. Then, the solutions of \eqref{system} are uniformly bounded in $C^2(M)$.
\end{pro}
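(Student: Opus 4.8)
My plan is to read \eqref{system} (equivalently its desingularized form \eqref{system2}, whose solutions are the regular parts produced by the substitution \eqref{change}) as the special case $n=2$, $A=K$ of the general system \eqref{gen}. As recorded after \eqref{L}, the matrix $K$ of \eqref{k} satisfies (H1) and (H2), and the critical set $\G$ attached to it is precisely $\L$. Since $(\rho_1,\rho_2)\notin\L$ and $\L$ is closed, I would enclose $(\rho_1,\rho_2)$ in a compact set $\mathcal K\subset\R^2_+\setminus\L$, so that part~1 of Theorem~A delivers a uniform bound $\|u_i\|_{L^\infty(M)}\le C$. Granting this reduction, the $L^\infty$ bound is exactly part~1 of Theorem~A; I will nonetheless recall the blow-up mechanism behind it, since that is where the content lies, and then upgrade $L^\infty$ to $C^2$.

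For the $L^\infty$ bound I would argue by contradiction, following the blow-up analysis of \cite{bl,lz}. If the bound failed there would be solutions with $\max_i\sup_M u_i^{(n)}\to+\infty$. Writing the system in mean-field form $-\Delta u_1=\rho_2(V_2-1)$, $-\Delta u_2=\rho_1(V_1-1)$ with $V_i=\wtilde h_ie^{u_i}/\int_M\wtilde h_ie^{u_i}\,dV_g$, $\int_M V_i\,dV_g=1$, the Brezis--Merle alternative applied to each component localizes the concentration on a finite set $S\subset M$. The decisive input, available precisely because $K$ obeys (H1)(H2), is the Lin--Zhang quantization: at every $q\in S$ the local masses of the two components coincide and are quantized, and all points of $S$ carry the same local mass. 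A Pohozaev identity localized at the points of $S$, together with the global balance $\int_M\rho_iV_i^{(n)}\,dV_g=\rho_i$ and the contribution of the Dirac data encoded by $\Sigma$, then yields the critical relation $\sum_{i,j}K_{ij}\rho_i\rho_j=8\pi n_k\sum_i\rho_i$, that is $\frac{\rho_1\rho_2}{\rho_1+\rho_2}=4\pi n_k$, so $(\rho_1,\rho_2)\in\L$ — against the hypothesis. Hence no blow-up occurs; combined with $\int_M u_i=0$ and the Green representation, the upper bound on $\sup_M u_i$ promotes to the two-sided $L^\infty$ estimate.

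To reach $C^2$ I would bootstrap. Once $\|u_i\|_{L^\infty}\le C$, the denominators $\int_M\wtilde h_ie^{u_i}\,dV_g$ are bounded above and below away from $0$, so the right-hand sides of \eqref{system2} are bounded in $L^\infty(M)$; elliptic $W^{2,p}$ estimates give a uniform $C^{1,\gamma}$ bound. Since $\a_j\ge0$ the weights satisfy $\wtilde h_i\in C^{0,\beta}(M)$ for some $\beta\in(0,1]$ (and $\wtilde h_i$ is smooth when all $\a_j=0$), so the right-hand sides are uniformly bounded in $C^{0,\min\{\beta,\gamma\}}(M)$, and Schauder estimates close the argument with a uniform bound in $C^{2,\min\{\beta,\gamma\}}(M)\subset C^2(M)$.

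The genuinely hard step is the system blow-up analysis: ruling out \emph{asymmetric} concentration in which the two components accumulate with mismatched masses, and pinning down the exact mass quantization for the skew coupling $K$. This is precisely what the structural hypotheses (H1)(H2) buy through the machinery of \cite{lz}; the only extra bookkeeping here, compared with the regular problem, is accounting for the non-integer local masses generated by the singular sources $p_j$, which is handled by the desingularization \eqref{change} and the resulting weights \eqref{h}.
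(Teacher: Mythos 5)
Your proposal is correct and takes essentially the same route as the paper: the paper proves Proposition \ref{pro:comp} purely by citation, viewing \eqref{system} (in its desingularized form \eqref{system2}) as the case $n=2$, $A=K$ of \eqref{gen}, noting that $K$ in \eqref{k} satisfies (H1)--(H2) with critical set $\G=\L$, and invoking part 1 of Theorem A from \cite{bl,lz} --- exactly your reduction. Your sketch of the underlying Lin--Zhang blow-up analysis and your explicit elliptic bootstrap from the $L^\infty$ bound to the stated uniform $C^2$ bound fill in steps the paper leaves implicit, but they do not constitute a different method.
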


\smallskip

The latter property is used to bypass the Palais-Smale condition, which is not known for $\wtilde J_\rho$. Indeed, one can use it jointly with the arguments of \cite{lucia}, where a deformation lemma for a class of functionals alike \eqref{funct2} is constructed, to derive the following.

\smallskip

\begin{lem} \label{lem:def}
Suppose $(\rho_1,\rho_2)\notin\L$ and that $\wtilde J_\rho$ has no critical levels inside some interval $[a,b]$. Then, $\wtilde J_\rho^a$ is a deformation retract of $\wtilde J_\rho^b$.
\end{lem}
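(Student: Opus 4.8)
The plan is to build the retraction by a negative pseudo-gradient flow for $\wtilde J_\rho$, following the construction of \cite{lucia}. First I would record that $\wtilde J_\rho$ is $C^1$ on $\ov H^1(M)$: this is immediate from the previous lemma, since $F\mapsto \wtilde G(F)$ is $C^1$ and $I_\rho$ is smooth. The key algebraic observation is the envelope identity: because $\wtilde G(F)$ minimizes $I_\rho(F,\cdot)$ as in \eqref{min}, we have $D_G I_\rho(F,\wtilde G(F))=0$, so that
\[
 D\wtilde J_\rho(F)[\phi]=D_F\!\left(\int_M |\n F|^2\,dV_g-I_\rho(F,\wtilde G(F))\right)[\phi],
\]
with no contribution from the variation of $\wtilde G$. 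In particular a critical point $F$ of $\wtilde J_\rho$ produces a critical pair $(F,\wtilde G(F))$ of $J_\rho$, hence a solution $(u_1,u_2)=(F-\wtilde G(F),\,F+\wtilde G(F))$ of \eqref{system}. This is what links the constrained problem to the a priori bound.

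Next I would set up the standard machinery: fix a locally Lipschitz pseudo-gradient vector field $W$ for $\wtilde J_\rho$ on the regular set, multiply it by a cutoff vanishing on $\wtilde J_\rho^a$ and equal to one on a neighborhood of the slab $\{a\le \wtilde J_\rho\le b\}$, and let $\eta(\cdot,F)$ be the flow of the resulting bounded field. Where a uniform lower bound $\|D\wtilde J_\rho\|\ge \delta>0$ is available on the slab, this flow pushes $\wtilde J_\rho^b$ below $a$ in finite time and yields the deformation retract by the classical argument.

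The main obstacle is exactly that such a lower bound is not guaranteed a priori, since the Palais--Smale condition is unknown for $\wtilde J_\rho$: the nonlinearities $\re^{u_i}$ allow concentration, so a gradient-vanishing sequence at a level $c\in[a,b]$ need not be precompact. This is where Proposition \ref{pro:comp} and \cite{lucia} enter. The blow-up masses for \eqref{system} are quantized and, as encoded in the critical set $\L$ of \eqref{L}, concentration at a level carrying a genuine critical point forces $(\rho_1,\rho_2)\in\L$. Since $(\rho_1,\rho_2)\notin\L$, Proposition \ref{pro:comp} applies, so the critical set of $\wtilde J_\rho$ is compact and, by hypothesis, disjoint from the slab $\{a\le \wtilde J_\rho\le b\}$. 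Lucia's deformation lemma is tailored precisely to functionals of this mean-field type: it replaces the missing compactness of Palais--Smale sequences by the compactness of the \emph{critical} set, constructing the flow so that trajectories starting in $\wtilde J_\rho^b$ either descend below $a$ or would have to accumulate at a critical level in $[a,b]$, which is excluded.

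The remaining work, and the genuinely delicate point, is to verify that $\wtilde J_\rho$ belongs to the class treated in \cite{lucia} despite being a reduced (constrained) functional rather than one written directly in the variables $(u_1,u_2)$. I would check that the structural estimates survive the reduction: the coercivity-type bound $I_\rho(F,\cdot)\ge \int_M|\n\cdot|^2\,dV_g-C$ and the convexity in $G$ established in the previous lemma give uniform control of $\wtilde G(F)$ in terms of $\|F\|$, so that the differential of $\wtilde J_\rho$ can be expressed through $\nabla J_\rho$ evaluated at $(F,\wtilde G(F))$ and the blow-up analysis for the system \eqref{system} transfers to the constrained setting. Once this compatibility is in place, invoking the deformation lemma of \cite{lucia} on $[a,b]$ gives that $\wtilde J_\rho^a$ is a deformation retract of $\wtilde J_\rho^b$, as claimed.
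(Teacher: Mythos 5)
Your proposal follows essentially the same route as the paper: both reduce the lemma to the deformation argument of \cite{lucia}, using the minimality property $I_\rho\bigl(F,\wtilde G(F)\bigr)\leq I_\rho(F,0)$ together with coercivity and the Moser--Trudinger inequality to control $\wtilde G(F)$ in terms of $\|F\|_{\ov H^1(M)}$ and thereby verify the compactness properties of the reduced functional, with Proposition \ref{pro:comp} supplying the compactness of the critical set in place of the unknown Palais--Smale condition. The paper is equally brief about adapting Lucia's construction to the constrained functional, so your sketch matches its proof in both structure and level of detail.
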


\smallskip

Indeed, for the validity of the above result one needs to check some compactness properties of the functional $I_\rho\bigr(F,\wtilde G(F)\bigr)$ in \eqref{funct2}. In our case, we just observe that whenever we have a bounded sequence $\|F_n\|_{\ov H^1(M)}\leq C$, then by construction of \eqref{min}, we have $I_\rho\bigr(F_n,\wtilde G(F_n)\bigr)\leq I_\rho(F_n,0)$. 
Hence, we obtain
\begin{align*}
\int_M |\n \wtilde G(F_n)|^2 \,dV_g &\leq I_\rho\bigr(F_n,\wtilde G(F_n)\bigr) + C \leq I_\rho(F_n,0) + C \\
																		&\leq C_1 \int_M |\n F_n|^2 \,dV_g+ C_2,
\end{align*}
Here, in the last inequality, we have used the Moser-Trudinger inequality. We also conclude that $\|\wtilde G(F_n)\|_{\ov H^1(M)}\leq C$. 
Thus, up to a subsequence, $F_n\rightharpoonup F$ and $\wtilde G(F_n) \rightharpoonup  G$ weakly in $\ov H^1(M)$.
Then, by the Moser-Trudinger inequality, we have $\re^{F_n}\to\re^{F}$ and $\re^{\wtilde G(F_n)}\to\re^{G}$ strongly in any $L^p(M)$. 
With this at hand, we can adapt the arguments of \cite{lucia} to our setting to obtain Lemma \ref{lem:def}.

\medskip

Observe next that, by Proposition \ref{pro:comp}, $\wtilde J_\rho$ has no critical points above a sufficiently large level $b\gg0$. Therefore, the deformation in Lemma \ref{lem:def} can be used to obtain a deformation retract of $\ov H^1(M)$ onto the sublevel $\wtilde J_\rho^b$ as in Corollary 2.8 in \cite{mal}.

\smallskip

\begin{pro} \label{pro:contr}
Suppose $(\rho_1, \rho_2)\notin\L$. Then, there exists $L > 0$ sufficiently
large such that $\wtilde J_\rho^L$ is a deformation retract of $\ov H^1(M)$. In particular, it
is contractible.
\end{pro}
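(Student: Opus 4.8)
The plan is to deduce Proposition~\ref{pro:contr} from the deformation Lemma~\ref{lem:def} by showing that $\wtilde J_\rho$ has no critical points above a sufficiently large level, and then propagating the retraction all the way out to the whole space $\ov H^1(M)$. The key input is the a~priori bound: by Proposition~\ref{pro:comp}, under the hypothesis $(\rho_1,\rho_2)\notin\L$ every solution of \eqref{system} is uniformly bounded in $C^2(M)$, hence the corresponding critical points $F$ of $\wtilde J_\rho$ lie in a fixed bounded set. Consequently the critical \emph{values} of $\wtilde J_\rho$ are bounded above by some finite constant, so one may pick $b\gg 0$ with no critical levels in $[b,\infty)$.

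First I would choose such a level $b$ and, for any $a>b$, apply Lemma~\ref{lem:def} on the interval $[b,a]$ to conclude that $\wtilde J_\rho^{\,b}$ is a deformation retract of $\wtilde J_\rho^{\,a}$. The remaining point is that the sublevels $\wtilde J_\rho^{\,a}$ exhaust $\ov H^1(M)$ as $a\to\infty$ and that one can pass from these deformations of arbitrarily high finite sublevels to a single deformation retract of the entire space onto $\wtilde J_\rho^{\,b}$. This is exactly the abstract mechanism recorded in Corollary~2.8 of \cite{mal}: once the flow has no critical values above $b$ and the relevant compactness (Palais--Smale--type) properties hold, the negative-gradient-type flow constructed in \cite{lucia} deforms $\ov H^1(M)$ onto $\wtilde J_\rho^{\,b}$. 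I would then set $L:=b$.

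For the final assertion, contractibility, I would argue that $\ov H^1(M)$ is itself contractible (being a vector space, it retracts to the origin by the straight-line homotopy $t\mapsto tF$), and a deformation retract of a contractible space is contractible; since $\wtilde J_\rho^{\,L}$ is a deformation retract of $\ov H^1(M)$, it inherits contractibility. Equivalently, $\wtilde J_\rho^{\,L}$ is homotopy equivalent to $\ov H^1(M)\simeq \{\text{pt}\}$.

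The main obstacle is not the topology but verifying that the deformation retract of high sublevels extends to a retract of the \emph{whole} space, i.e.\ controlling the flow globally on the unbounded region $\{\wtilde J_\rho>b\}$. This requires that the flow does not escape to infinity in finite time and that the compactness along bounded energy pieces (the estimates on $\|\wtilde G(F_n)\|_{\ov H^1(M)}$ established just before Lemma~\ref{lem:def}) suffices to run the construction of \cite{lucia, mal} uniformly. Since Lemma~\ref{lem:def} already encapsulates the hard compactness work and \cite{mal} provides the abstract extension from finite sublevels to the full space, the proof reduces to invoking these results with the level $b$ furnished by Proposition~\ref{pro:comp}.
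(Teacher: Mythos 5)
Your proposal is correct and follows essentially the same route as the paper: the a priori bounds of Proposition~\ref{pro:comp} rule out critical levels of $\wtilde J_\rho$ above some $b\gg 0$, and then the deformation of Lemma~\ref{lem:def}, extended to the whole space via Corollary~2.8 of \cite{mal}, yields the retraction of $\ov H^1(M)$ onto $\wtilde J_\rho^{\,b}$, with contractibility inherited from the ambient Hilbert space. Your explicit remark that the critical points of $\wtilde J_\rho$ correspond to solutions of \eqref{system} (so the $C^2$ bounds do bound the critical values) is exactly the link the paper uses implicitly.
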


\smallskip

The existence results of the main Theorem \ref{thm1} will then follow by showing the low sublevels of $\wtilde J_\rho$ are not contractible. Indeed, we will see that low sublevels are related to some family of unit measures supported in (at most) $k$ points of $M$, known as the formal barycenters of $M$ of order $k$:
\begin{equation}\label{sigk}
	M_k = \left\{ \sum_{i=1}^k t_i\delta_{x_i} \, : \, \sum_{i=1}^k t_i=1,t_i\geq 0,x_i\in M,\forall\,i=1,\dots,k \right\},
\end{equation}
as it happens for the standard Liouville equation \eqref{mf}; see, for example, \cite{BdMM,dj}. We will start by showing that there exists a map
$$
	\Psi: \wtilde J_\rho^{-L}\to M_k,
$$
for some $L > 0$ sufficiently large and $\frac{\rho_1+\rho_2}{2} < 8(k+1)\pi$. This is done by means of improved versions of the Moser-Trudinger inequality
\be \label{mt}
	2\log \int_M \widehat h\,\re^F\,dV_g \leq \frac{1}{8\pi} \int_M |\n F|^2\,dV_g+C, \quad F\in \ov H^1(M),
\ee
where $C$ is a constant depending only on $M$ and
$$
 \widehat h(x)=\re^{-4\pi\sum_{j=1}^{N} \a_jG_{p_j}(x)} 
$$
is the singular part of $\wtilde h_i$, so that $\wtilde h_i(x)=h_i(x)\widehat h(x)$. (See \eqref{change}). The inequality \eqref{mt} has been first derived for the regular case ($\a_j=0$ for all $j$), but it remains valid with the singular weight $\widehat h$ since $\widehat h$ is uniformly bounded by $\a_j\geq0$, $j=1,\cdots, N$. 
For such an inequality, one has the following improvement, provided that the conformal volume $\widehat h\,\re^F\,dV_g$ is suitably spread out. For the proof, we refer to \cite{chenli,dj}.

\smallskip

\begin{lem} \label{lem:mt}
Let $l$ be an integer and let $\O_1,\dots,\O_l$ be subsets of $M$ satisfying $d(\O_i,\O_j)\geq\d$ for any $i\neq j$, for some $\d>0$. Then, for any $\g>0$ and for any $\e>0$, there exists $C=C(M,l,\d,\g,\e)$ such that
$$
	2\log \int_M \widehat h\,\re^F\,dV_g \leq \frac{1+\e}{8l\pi} \int_M |\n F|^2\,dV_g+C,
$$
for any $F\in \ov H^1(M)$ such that
\be \label{spread}
	\int_{\O_i} \frac{\widehat h\,\re^{F}\,dV_g }{\int_M \widehat h\,\re^{F}\,dV_g} \geq \g, \quad j=1,\dots,l.
\ee
\end{lem}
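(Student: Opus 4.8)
The overall strategy is to combine the spreading hypothesis \eqref{spread} with a \emph{localized, sharp} Moser--Trudinger estimate, exploiting the $l$ separated regions to gain the factor $l$ in the leading constant. Since the $\O_i$ are $\d$-separated, I would enlarge each to its $\d/4$-neighborhood $\wtilde\O_i$, so that the $\wtilde\O_i$ remain pairwise disjoint and each $\O_i$ stays at distance $\ge\d/4$ from $\partial\wtilde\O_i$. As the $\wtilde\O_i$ are disjoint, $\sum_{i=1}^l\int_{\wtilde\O_i}|\n F|^2\,dV_g\le\int_M|\n F|^2\,dV_g$, so by the pigeonhole principle there is an index $i_0$ with
\[
  \int_{\wtilde\O_{i_0}}|\n F|^2\,dV_g\le\frac1l\int_M|\n F|^2\,dV_g.
\]
On the other hand, \eqref{spread} gives $\int_M\widehat h\,\re^F\,dV_g\le\g^{-1}\int_{\O_{i_0}}\widehat h\,\re^F\,dV_g$, hence
\[
  \log\int_M\widehat h\,\re^F\,dV_g\le\log\int_{\O_{i_0}}\widehat h\,\re^F\,dV_g+\log\tfrac1\g .
\]
Thus everything reduces to bounding $\log\int_{\O_{i_0}}\widehat h\,\re^F$ by the \emph{local} Dirichlet energy on $\wtilde\O_{i_0}$ with the sharp constant, since that energy is at most $\frac1l$ of the global one.

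The key ingredient, and the point where I expect the only genuine difficulty, is the localized sharp inequality
\[
  \log\int_{\O_i}\widehat h\,\re^F\,dV_g\le\Bigl(\tfrac{1}{16\pi}+\e_0\Bigr)\int_{\wtilde\O_i}|\n F|^2\,dV_g+\frac{1}{|\wtilde\O_i|}\int_{\wtilde\O_i}F\,dV_g+C,
\]
valid for every $\e_0>0$. A naive cutoff from $\O_i$ to $\wtilde\O_i$ degrades the sharp constant $\tfrac{1}{16\pi}$ by a fixed factor, so I would instead use an orthogonal decomposition. Since $\widehat h$ is bounded, it suffices to treat $\re^F$; I would split $F=F_1+F_2$ on $\wtilde\O_i$, where $F_2$ solves $\Delta F_2=0$ in $\wtilde\O_i$ with $F_2=F$ on $\partial\wtilde\O_i$ and $F_1=F-F_2\in H^1_0(\wtilde\O_i)$. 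Orthogonality gives $\int_{\wtilde\O_i}|\n F|^2=\int_{\wtilde\O_i}|\n F_1|^2+\int_{\wtilde\O_i}|\n F_2|^2$, so that no energy is lost in the splitting. Moser's sharp inequality applied to $F_1\in H^1_0(\wtilde\O_i)$ (read in a conformal chart, using conformal invariance of the Dirichlet integral in dimension two) yields $\log\int_{\wtilde\O_i}\re^{F_1}\le\tfrac{1}{16\pi}\int_{\wtilde\O_i}|\n F_1|^2+C$, while interior estimates for the harmonic function $F_2$ bound $\sup_{\O_i}F_2$ (recall $\O_i$ sits well inside $\wtilde\O_i$) by its average over $\wtilde\O_i$ plus $C\,\|\n F_2\|_{L^2(\wtilde\O_i)}$. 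Writing $\re^F=\re^{F_1}\re^{F_2}$ and combining, the harmonic contribution enters only linearly in $\|\n F_2\|_{L^2}$, which by Young's inequality is absorbed into $\e_0\int_{\wtilde\O_i}|\n F|^2+C$; this is exactly what preserves the sharp constant up to $\e_0$. This is the localized Moser--Trudinger estimate of \cite{chenli,dj}.

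Finally I would assemble the estimate. Inserting the local inequality at $i=i_0$ and using the pigeonhole bound,
\[
  \log\int_M\widehat h\,\re^F\,dV_g\le\Bigl(\tfrac{1}{16\pi}+\e_0\Bigr)\frac1l\int_M|\n F|^2\,dV_g+\frac{1}{|\wtilde\O_{i_0}|}\int_{\wtilde\O_{i_0}}F\,dV_g+C.
\]
The remaining average term is lower order: since $F\in\ov H^1(M)$ has zero mean, Cauchy--Schwarz and the Poincar\'e inequality on $M$ give $\bigl|\tfrac{1}{|\wtilde\O_{i_0}|}\int_{\wtilde\O_{i_0}}F\,dV_g\bigr|\le C\|F\|_{L^2(M)}\le C'\bigl(\int_M|\n F|^2\,dV_g\bigr)^{1/2}$, and Young's inequality converts this into $\e_1\int_M|\n F|^2\,dV_g+C_{\e_1}$. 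Multiplying by $2$ yields
\[
  2\log\int_M\widehat h\,\re^F\,dV_g\le\Bigl(\tfrac{1}{8\pi l}+\tfrac{2\e_0}{l}+2\e_1\Bigr)\int_M|\n F|^2\,dV_g+C.
\]
For a prescribed $\e>0$ it then suffices to fix $\e_0$ and $\e_1$ small enough that $\tfrac{2\e_0}{l}+2\e_1\le\tfrac{\e}{8\pi l}$, which gives the asserted inequality with $C=C(M,l,\d,\g,\e)$.
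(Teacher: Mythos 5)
Your overall architecture is sound and matches the standard reductions: enlarging to disjoint $\d/4$-neighborhoods, pigeonholing an index $i_0$ with $\int_{\wtilde\O_{i_0}}|\n F|^2\,dV_g\le\frac1l\int_M|\n F|^2\,dV_g$, invoking \eqref{spread} only at $i_0$, and absorbing all terms linear in $\|\n F\|_{L^2(M)}$ by Young's inequality. (Note the paper itself gives no proof, deferring to \cite{chenli,dj}, so the comparison is with those arguments.) The genuine gap is your harmonic-part estimate
$\sup_{\O_i}F_2\le\frac{1}{|\wtilde\O_i|}\int_{\wtilde\O_i}F_2\,dV_g+C\|\n F_2\|_{L^2(\wtilde\O_i)}$
with $C=C(M,\d)$. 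The sets $\O_i$ are arbitrary subject only to mutual separation, so $\wtilde\O_i$ need not be connected, and for disconnected $\wtilde\O_i$ the estimate is simply false: take $\O_i$ a union of two balls far apart, so $\wtilde\O_i$ has two components, and let $F_2$ equal $0$ on one component and a large constant $D$ on the other. This $F_2$ is harmonic with $\n F_2\equiv0$, arises as the harmonic part of an admissible global $F$, and gives $\sup_{\O_i}F_2=D$ while the average over $\wtilde\O_i$ is roughly $D/2$. Even when $\wtilde\O_i$ is connected, it can have arbitrarily thin necks, across which a harmonic function can jump at negligible local energy cost, so no constant depending only on $(M,l,\d,\g,\e)$ exists; interior elliptic estimates control $F_2(x)$ by its average over the single ball $B_{\d/4}(x)$, never by the average over all of $\wtilde\O_i$. (A smaller issue: Moser's sharp $H^1_0$ inequality on $\wtilde\O_i$ cannot be read in ``a conformal chart,'' since $\wtilde\O_i$ need not lie in one; this is fixable but needs saying.)

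The gap is repairable within your scheme, precisely because only linear error terms are needed: for $x\in\O_{i_0}$ one has $B_{\d/4}(x)\subseteq\wtilde\O_{i_0}$, hence
$F_2(x)\le\frac{1}{|B|}\int_{B}F_2\,dV_g+C(M,\d)\|\n F_2\|_{L^2(B)}$ with $B=B_{\d/4}(x)$, and then $\frac{1}{|B|}\int_B F_2=\frac{1}{|B|}\int_B F-\frac{1}{|B|}\int_B F_1$, where $\bigl|\frac{1}{|B|}\int_B F\bigr|\le C\|F\|_{L^2(M)}\le C\|\n F\|_{L^2(M)}$ by Poincar\'e on $M$, and $\bigl|\frac{1}{|B|}\int_B F_1\bigr|\le C\|\n F_1\|_{L^2}$ by the Poincar\'e inequality for $H^1_0(\wtilde\O_{i_0})$, which is uniform for $l\ge2$ since the complement contains another $\wtilde\O_j$ of volume bounded below (the case $l=1$ is just \eqref{mt}). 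All these terms are linear in $\|\n F\|_{L^2(M)}$, so Young absorbs them and the domain average never enters. For contrast, the cited proofs avoid domain-dependent constants altogether: they use a cutoff $g\equiv1$ near $\O_{i_0}$ with $|\n g|\le C/\d$ together with a low/high spectral splitting $F=F_{1}+F_{2}$ at eigenvalue threshold $\L_\e$, where $\|F_{1}\|_{L^\infty}\le C_{\L_\e}\|\n F\|_{L^2}$ is linear (Young again) and $\|F_{2}\|_{L^2}^2\le\L_\e^{-1}\|\n F\|_{L^2}^2$ makes the quadratic cutoff error $C_\d^2\|F_{2}\|_{L^2}^2$ carry an arbitrarily small coefficient. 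Your orthogonal harmonic splitting is a genuinely different and attractive device, but as written the sup-versus-domain-average step fails and must be replaced as above.
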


\medskip

We now explain how to use this result in our setting. To this end, we start with the following estimate. 

\smallskip

\begin{lem} \label{lem:est1}
For any $F\in\ov H^1(M)$ we have
$$
	\wtilde J_\rho(F)\geq\int_M |\n F|^2 \,dV_g-(\rho_1+\rho_2)\log \int_M \widehat h\,\re^F \,dV_g -C,
$$
where $C>0$ depends only on $h_1, h_2$, $\rho_1, \rho_2$ and $M$.
\end{lem}

\begin{proof}
Observe that, by definition \eqref{min}, we have $I_\rho\bigr(F,\wtilde G(F)\bigr)\leq I_\rho(F,0)$ for any $F\in \ov H^1(M)$. Hence,
\begin{align*}
	\wtilde J_\rho(F) &=\int_M |\n F|^2 \,dV_g -I_\rho\bigr(F,\wtilde G(F)\bigr) \geq \int_M |\n F|^2 \,dV_g -I_\rho(F,0) \\
										&\geq\int_M |\n F|^2 \,dV_g-(\rho_1+\rho_2)\log \int_M \widehat h\,\re^F \,dV_g -C,
\end{align*}
where $C>0$ depends only on $h_1, h_2$, $\rho_1, \rho_2$, $M$ and we are using $\frac1C \leq h_i \leq C$.
\end{proof}

\medskip

We can now state the crucial property.

\smallskip

\begin{lem}
Suppose $\frac{\rho_1+\rho_2}{2} < 8(k+1)\pi$. Then, for any $\e,r>0$, there exists $L=L(\e,r)>0$ such that for any $F\in\wtilde J_\rho^{-L}$ there exist $k$ points $\{p_1,\dots,p_k\}\subset M$ such that
$$
	\int_{\bigcup_{i=1}^k B_r(p_i)} \frac{\widehat h\,\re^{F}\,dV_g }{\int_M \widehat h\,\re^{F}\,dV_g} \geq 1-\e.
$$
\end{lem}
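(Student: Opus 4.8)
The plan is to argue by dichotomy on the normalized conformal volume
$$
\mu_F = \frac{\widehat h\,\re^F\,dV_g}{\int_M \widehat h\,\re^F\,dV_g},
$$
which is a probability measure on $M$. Either there already exist $k$ points whose $r$-balls capture mass at least $1-\e$ under $\mu_F$---in which case the conclusion holds outright---or no such $k$-tuple exists, and I will show that this second alternative forces $\wtilde J_\rho(F)$ to be bounded below by a constant independent of $F$. Choosing $L$ larger than that constant then excludes the second alternative on $\wtilde J_\rho^{-L}$, giving the claim.

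The crucial ingredient is a purely measure-theoretic covering lemma: for fixed $k,\e,r$ there exist $\g=\g(k,\e,r,M)>0$ and $\d=\d(k,\e,r,M)>0$ such that, whenever a probability measure $\mu$ on $M$ satisfies $\mu\bigl(\bigcup_{i=1}^k B_r(p_i)\bigr)<1-\e$ for \emph{every} choice of $p_1,\dots,p_k\in M$, one can find $k+1$ sets $\O_1,\dots,\O_{k+1}$ with $d(\O_i,\O_j)\geq\d$ for $i\neq j$ and $\mu(\O_i)\geq\g$ for each $i$. This is proved by a greedy selection: one repeatedly picks a ball of radius comparable to $r$ carrying the most mass, deletes a neighborhood of it, and uses the failure of $k$-point concentration to ensure that after $k$ deletions a substantial amount of mass---bounded below in terms of $\e$---still remains, which must then lie in at least one further separated ball. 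I expect this combinatorial extraction, together with the requirement that $\d$ and $\g$ be taken \emph{uniform} in $\mu$, to be the main technical point.

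Granting the covering lemma, I apply it to $\mu=\mu_F$ in the non-concentration case to produce $k+1$ sets $\O_i$ that are $\d$-separated with $\mu_F(\O_i)\geq\g$, which is exactly the spreading condition \eqref{spread} with $l=k+1$. Here it is essential that the constant in the improved Moser--Trudinger inequality (Lemma \ref{lem:mt}) depends only on $M,l,\d,\g$ and the improvement parameter, and not on the particular sets $\O_i$, so that the resulting estimate is uniform in $F$. Fixing a small $\e'>0$, Lemma \ref{lem:mt} yields
$$
 \log \int_M \widehat h\,\re^F\,dV_g \leq \frac{1+\e'}{16(k+1)\pi} \int_M |\n F|^2\,dV_g + C.
$$
Combining this with the lower bound of Lemma \ref{lem:est1} gives
$$
 \wtilde J_\rho(F) \geq \left(1 - \frac{(\rho_1+\rho_2)(1+\e')}{16(k+1)\pi}\right)\int_M |\n F|^2\,dV_g - C'.
$$
Since the hypothesis $\tfrac{\rho_1+\rho_2}{2} < 8(k+1)\pi$ means $\rho_1+\rho_2 < 16(k+1)\pi$, choosing $\e'$ small enough makes the coefficient strictly positive, whence $\wtilde J_\rho(F)\geq -C'$ throughout the non-concentration case. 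Setting $L:=C'+1$ then forces every $F\in\wtilde J_\rho^{-L}$ into the concentration case, which is precisely the desired conclusion.
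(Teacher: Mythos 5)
Your proposal is correct and is essentially the paper's argument: the paper proves the same dichotomy by contradiction along a sequence $F_n$ with $\wtilde J_\rho(F_n)\to-\infty$, invokes the covering lemma (Lemma 3.3 of \cite{dj}, exactly the uniform-in-$\mu$ extraction you sketch) to produce $k+1$ sets satisfying \eqref{spread} with $l=k+1$, and then combines Lemma \ref{lem:est1} with Lemma \ref{lem:mt} to get the same uniform lower bound $\wtilde J_\rho(F_n)\geq \bigl(1-\tfrac{\rho_1+\rho_2}{2}\tfrac{1+\e}{8(k+1)\pi}\bigr)\int_M|\n F_n|^2\,dV_g - C$ using $\rho_1+\rho_2<16(k+1)\pi$. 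Your direct formulation (uniform bound on the non-concentration set, then $L:=C'+1$) is merely a cosmetic repackaging of the paper's contradiction argument, with the same constants and the same dependence $L=L(\e,r)$.
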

\begin{proof}
Suppose by contradiction that the statement is not true. It means that there exist $\e,r>0$ and $F_n\in\ov H^1(M)$ with 
\be \label{inf}
\wtilde J_\rho(F_n)\to-\infty
\ee 
such that for any $k$-tuple $\{p_1,\dots,p_k\}\subset M$ there holds $\int_{\bigcup_{i=1}^k B_r(p_i)} \frac{\widehat h\,\re^{F}\,dV_g }{\int_M \widehat h\,\re^{F}\,dV_g} < 1-\e$. Then, by a covering argument as in Lemma 3.3 in \cite{dj}, there exist $k+1$ subsets $\O_1,\dots,\O_{k+1}$ of $M$ satisfying the volume spreading condition \eqref{spread} with $F=F_n$ and $l=k+1$. By Lemma \ref{lem:est1} and the improved inequality in Lemma \ref{lem:mt}, it follows that
\begin{align*}
		\wtilde J_\rho(F_n) & \geq\int_M |\n F_n|^2 \,dV_g-(\rho_1+\rho_2)\log \int_M \widehat h\,\re^{F_n} \,dV_g -C \\
											& \geq \left(1-\frac{\rho_1+\rho_2}{2}\frac{1+\e}{8(k+1)\pi}\right)\int_M |\n F_n|^2 \,dV_g-C \\
											& \geq -C
\end{align*}
for $\e>0$ sufficiently small, where in the last inequality we have used $\frac{\rho_1+\rho_2}{2} < 8(k+1)\pi$. This is clearly in contradiction with \eqref{inf}.
\end{proof}

\medskip

The above lemma shows that the measure $\frac{\widehat h\,\re^{F}\,dV_g }{\int_M \widehat h\,\re^{F}\,dV_g}$ resembles a measure supported in (at most) $k$ points. Then, it is not difficult to project such measure on the closest element of the formal barycenters $M_k$. We refer to Lemma 4.9 in \cite{dj} for the proof.

\smallskip

\begin{pro} \label{pro:proj}
Suppose $\frac{\rho_1+\rho_2}{2} < 8(k+1)\pi$. Then, there exists a projection $\Psi: \wtilde J_\rho^{-L}\to M_k$.
\end{pro}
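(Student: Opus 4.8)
The plan is to read each $F$ through the normalized conformal volume
$$
\mu_F=\frac{\widehat h\,\re^{F}\,dV_g}{\int_M \widehat h\,\re^{F}\,dV_g}\in\mathcal P(M),
$$
and to realize $\Psi$ as the composition of the assignment $F\mapsto\mu_F$ with a retraction of a small neighborhood of $M_k$ (inside the space of probability measures) onto $M_k$. The first point is that $F\mapsto\mu_F$ is continuous from $\ov H^1(M)$ into $\mathcal P(M)$ equipped with the weak-$*$ topology: if $F_n\to F$ in $\ov H^1(M)$ then $\re^{F_n}\to\re^{F}$ in every $L^p(M)$ by the Moser-Trudinger inequality (exactly as used after Lemma \ref{lem:def}), and since $\widehat h$ is bounded this gives $\widehat h\,\re^{F_n}\to\widehat h\,\re^{F}$ in $L^1(M)$ with denominators bounded away from $0$, whence $\mu_{F_n}\to\mu_F$ even in total variation.

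Next I would quantify how close $\mu_F$ lies to $M_k$. Metrize the weak-$*$ topology on the compact set $\mathcal P(M)$ by the Kantorovich-Rubinstein distance
$$
\mathbf d(\mu,\nu)=\sup\Bigl\{\ \Bigl|\int_M\phi\,d\mu-\int_M\phi\,d\nu\Bigr|\ :\ \|\phi\|_{\mathrm{Lip}}\leq 1\ \Bigr\},
$$
so that $M_k$ is a compact subset of $(\mathcal P(M),\mathbf d)$. Given $\e,r>0$ and $F\in\wtilde J_\rho^{-L}$ with $L=L(\e,r)$ as in the previous lemma, take the points $p_1,\dots,p_k$ produced there, assign to each $p_i$ the Voronoi cell $A_i\subseteq B_r(p_i)$ of points of $\bigcup_j B_r(p_j)$ closest to $p_i$, set $t_i=\mu_F(A_i)$ and add the residual mass $1-\sum_i t_i\leq\e$ to $t_1$, obtaining $\sigma_F=\sum_{i=1}^k t_i\delta_{p_i}\in M_k$. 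Moving the mass inside each $A_i$ to $p_i$ shifts the integral of a $1$-Lipschitz $\phi$ by at most $r$, while the at most $\e$ of reassigned mass costs at most $C\e$ with $C=\mathrm{diam}(M)$; hence $\mathbf d(\mu_F,M_k)\leq\mathbf d(\mu_F,\sigma_F)\leq C(\e+r)$, uniformly over $\wtilde J_\rho^{-L}$.

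I would then use that $M_k$ is a compact absolute neighborhood retract: it is the continuous image of the compact $M^k\times\Delta^{k-1}$ (configurations of $k$ points with a probability weight vector) under $(x,t)\mapsto\sum_i t_i\delta_{x_i}$, hence a finite-dimensional, compact, locally contractible metric space, so an ANR. Consequently there are $\eta_0>0$ and a continuous retraction $\mathbf r\colon\{\mu:\mathbf d(\mu,M_k)<\eta_0\}\to M_k$. Choosing $\e,r$ so small that $C(\e+r)<\eta_0$ and then fixing $L=L(\e,r)$, every $\mu_F$ with $F\in\wtilde J_\rho^{-L}$ lies in the domain of $\mathbf r$, and I set
$$
\Psi(F)=\mathbf r(\mu_F),
$$
which is continuous as a composition of continuous maps.

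The main obstacle is precisely the construction and the continuity of the retraction $\mathbf r$ near the lower-dimensional strata of $M_k$, where concentration points $p_i$ collide or some weights $t_i$ vanish: there the naive selection of points and masses is not single-valued, and one must verify that the nearest-point structure of $M_k$ genuinely promotes to a continuous retraction of a full neighborhood, which is where the ANR property is essential. This delicate step is carried out explicitly in \cite{dj} (Lemma 4.9); I would adapt that construction, the only modification being the bounded singular weight $\widehat h$, which is harmless and affects none of the estimates above.
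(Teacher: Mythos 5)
Your proposal takes essentially the same route as the paper: the paper's own proof simply combines the preceding concentration lemma with the projection of the nearly-atomic measure $\frac{\widehat h\,\re^{F}\,dV_g}{\int_M \widehat h\,\re^{F}\,dV_g}$ onto the closest element of $M_k$, deferring the delicate neighborhood-retraction construction to Lemma 4.9 of \cite{dj}, exactly as you do in your final paragraph. Your added scaffolding (continuity of $F\mapsto\mu_F$ via Moser--Trudinger and the uniform Kantorovich--Rubinstein bound $\mathbf{d}(\mu_F,M_k)\leq C(\e+r)$) is sound, with the one caveat that your quick ANR justification is shaky as stated --- a continuous image of a finite-dimensional compactum need not be finite-dimensional, and local contractibility of $M_k$ near collapsing strata is precisely the nontrivial point --- which is why falling back on the explicit construction in \cite{dj}, as both you and the paper do, is the right move.
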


\

To conclude the characterization of the low sublevels, we need to construct a reverse map: more precisely, we will show that there exists a map
$$
	\Phi: X\subseteq M_k\to \wtilde J_\rho^{-L},
$$
for a suitable subset $X$ to be chosen later and $\rho_1,\rho_2>8k\pi$. To this end, let us introduce the following family of test functions modeled on a sum of regular bubbles. Given $\s\in M_k$, $\s=\sum_{i=1}^k t_i\d_{x_i}$ and $\l>0$ we set $\varphi_{\l,\s}:M\to\R$
\be \label{test}
	\varphi_{\l,\s}(y) = \log \sum_{i=1}^k t_i \left( \frac{\l}{1+\l^2 d(y,x_i)^2} \right)^2 -\log\pi.
\ee
For this class of functions, the following property holds true as in Proposition 4.2 in \cite{mal2}.

\smallskip

\begin{lem} \label{lem:test}
Let $K$ be a compact subset of $M\setminus\{p_1,\dots,p_N\}$, $K_k\subset M_k$ be the formal barycenters supported in $K$ and let $\varphi_{\l,\s}$, $\s\in K_k$ be the functions defined above. Then, for $\l\to+\infty$ we have
\begin{align*}
	&\frac12 \int_M |\n \varphi_{\l,\s}|^2 \,dV_g = 16k\pi(1+o_\l(1))\log\l, \\
	&\log \int_M \widehat h\,\re^{\varphi_{\l,\s}-\ov{\varphi}_{\l,\s}} \,dV_g = 2(1+o_\l(1))\log\l,
\end{align*}
where $\ov{\varphi}_{\l,\s}$ is the average of ${\varphi}_{\l,\s}$ and $o_\l(1)\to0$ as $\l\to+\infty$. Moreover,
\be \label{conv}
\frac{\re^{\varphi_{\l,\s}}}{\int_M \re^{\varphi_{\l,\s}}} \rightharpoonup \s\in K_k,
\ee
weakly in the sense of measures.
\end{lem}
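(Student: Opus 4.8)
The plan is to estimate each of the three quantities in Lemma~\ref{lem:test} by analyzing the standard bubble profile. I will work with the single-bubble building block $U_{\l,x}(y)=\log\bigl(\l/(1+\l^2 d(y,x)^2)\bigr)^2$, recalling that on $\R^2$ (and hence locally on $M$ via normal coordinates) this is, up to the additive constant $-\log\pi$, the standard solution of $-\Delta U = e^U$ with $\int e^U = 8\pi$ and Dirichlet energy $\int|\n U|^2 = 16\pi\log\l + O(1)$. Since $\s\in K_k$ ranges over a compact set of barycenters supported in the \emph{fixed} compact $K\subset M\setminus\{p_1,\dots,p_N\}$, the mutual distances between the support points and the distance to the singular set are bounded below uniformly, which is what guarantees that all error terms are $o_\l(1)$ \emph{uniformly} in $\s\in K_k$.

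For the gradient term I would localize: away from balls $B_{\d}(x_i)$ of fixed small radius, $\varphi_{\l,\s}$ converges (uniformly in $\s$) to a smooth function with bounded energy, so the leading contribution comes from the $k$ bubble cores. On $B_\d(x_i)$, in normal coordinates, $\varphi_{\l,\s}$ agrees with $U_{\l,x_i}$ up to $O(1)$ lower-order corrections coming from the cross terms inside the logarithm (which are negligible near $x_i$ once $\l$ is large, because the $i$-th term dominates the sum there). Summing the $k$ contributions $16\pi\log\l$ and multiplying by $\tfrac12$ gives $8k\pi\log\l$... but the claimed leading coefficient is $16k\pi$, so I would need to be careful: the factor of $2$ in the exponent of the bubble doubles the energy, yielding $\int|\n U_{\l,x}|^2 = 32\pi\log\l+O(1)$, hence $\tfrac12\sum_i\int|\n\varphi|^2 = 16k\pi\log\l(1+o_\l(1))$, consistent with the statement. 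For the mass term, I would use $\int_M \widehat h\,e^{U_{\l,x_i}}\,dV_g \sim c\,\l^0$? No---one computes $\int e^{U_{\l,x}}\,dV_g \to 8\pi$ bounded, so after subtracting the average $\ov\varphi_{\l,\s}$ (which behaves like $-2\log\l + O(1)$ by direct integration of $\varphi_{\l,\s}$ against $dV_g$, since the bubble mass is $O(1)$ while the volume is normalized), the normalization shifts $\log\int\widehat h\,e^{\varphi-\ov\varphi}$ to $2(1+o_\l(1))\log\l$. The singular weight $\widehat h$ only multiplies by a factor bounded above and below on $K$, contributing $O(1)$.

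For the weak convergence \eqref{conv} I would test $\frac{e^{\varphi_{\l,\s}}}{\int e^{\varphi_{\l,\s}}}$ against an arbitrary $f\in C^0(M)$: each normalized bubble $e^{U_{\l,x_i}}/\int e^{U_{\l,x_i}}$ is an approximate identity concentrating at $x_i$ as $\l\to\infty$, so the convex combination with weights $t_i$ converges to $\sum_i t_i f(x_i)=\int f\,d\s$. The only subtlety is that $e^{\varphi_{\l,\s}}$ is a logarithm of a \emph{sum}, not a sum of the individual $e^{U_{\l,x_i}}$; but since $\exp$ is applied after the $\log$ of the sum, one has exactly $e^{\varphi_{\l,\s}}=\pi^{-1}\sum_i t_i(\l/(1+\l^2 d(\cdot,x_i)^2))^2$, so it genuinely \emph{is} the weighted sum of bubbles, and the argument goes through directly.

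\textbf{The main obstacle} I anticipate is not any single estimate---each is a routine bubble computation---but rather establishing that all the $o_\l(1)$ remainders are \emph{uniform} over $\s\in K_k$, including the degenerate configurations where two or more points $x_i$ coalesce (so that $\s$ approaches a barycenter of lower order) or where some weights $t_i\to 0$. In those regimes the naive core-by-core decomposition breaks down, and I would handle it by exploiting the compactness of $K_k$ together with the observation that the leading-order coefficients $16k\pi$ and $2$ are insensitive to coincidences of points (a merged cluster of total weight still produces the same $\log\l$ rate, since the mass and energy scale with $\log\l$ and not with the number of distinct points), so the limits persist by a continuity/closedness argument; this is exactly why the cited Proposition~4.2 in \cite{mal2} is the right reference to invoke for the delicate uniformity.
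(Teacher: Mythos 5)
Your computation is, in substance, the argument that the paper delegates entirely to a citation: the paper offers no proof of this lemma beyond invoking Proposition 4.2 of \cite{mal2} and remarking that the singular weight $\widehat h$ is bounded above and below near the compact set $K$ (so it only shifts constants), a point you reproduce correctly. For a fixed $\sigma$ with $k$ distinct support points and weights bounded away from zero, your three estimates are right, modulo harmless normalization slips that you mostly caught yourself: for your $U_{\lambda,x}$ one has $-\Delta U=8e^{U}$ and $\int_{\R^2}e^{U}\,dx=\pi$ (not $8\pi$), and $\int|\nabla U|^2=32\pi\log\lambda+O(1)$, which is the correction you made in-line. Your observation that $e^{\varphi_{\lambda,\sigma}}$ is \emph{exactly} the weighted sum of bubbles, so that \eqref{conv} follows from each normalized bubble being an approximate identity, is also correct.

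However, your resolution of the degenerate configurations --- the step you yourself flag as the main obstacle --- contains a genuine error. The leading coefficient $16k\pi$ is \emph{not} insensitive to coincidences: if $\sigma\in K_k$ has only $m<k$ distinct support points (or some $t_i=0$), then $\varphi_{\lambda,\sigma}$ is a sum of $m$ bubbles and $\frac12\int_M|\nabla\varphi_{\lambda,\sigma}|^2\,dV_g=16m\pi(1+o_\lambda(1))\log\lambda$; for instance $k=2$, $x_1=x_2$, $t_1=t_2=\tfrac12$ gives $16\pi\log\lambda$ rather than $32\pi\log\lambda$. (The coalescence-insensitivity you assert is true for the mass/average term, whose coefficient $2$ uses only $\sum_i t_i=1$, but false for the Dirichlet energy, which scales with the number of distinct cores.) Consequently the two-sided asymptotics cannot hold with $o_\lambda(1)$ uniform on $K_k$, and no continuity or closedness argument can rescue an identity that simply fails at these $\sigma$. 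The correct uniform statements --- and the only ones the paper ever uses, in Proposition \ref{pro:test} --- are one-sided: $\int_M|\nabla\varphi_{\lambda,\sigma}|^2\,dV_g\le 32k\pi(1+o_\lambda(1))\log\lambda$, proved not core-by-core but from the pointwise bounds $|\nabla\varphi_{\lambda,\sigma}(y)|\le C\lambda$ and $|\nabla\varphi_{\lambda,\sigma}(y)|\le C/\min_i d(y,x_i)$, valid for \emph{every} $\sigma\in M_k$, by integrating over the balls $B_{1/\lambda}(x_i)$ and their complement; together with $\overline{\varphi}_{\lambda,\sigma}=-2\log\lambda+O(1)$ and $\log\int_M\widehat h\,e^{\varphi_{\lambda,\sigma}}\,dV_g=O(1)$ uniformly, for which your arguments are fine. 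Replacing your continuity step by these pointwise gradient estimates repairs the proof and reproduces the estimates behind the cited Proposition 4.2 of \cite{mal2}; correspondingly, the equalities in the statement should be read as the one-sided bounds actually used (or restricted to nondegenerate $\sigma$), since only those enter the construction of the map $\Phi$.
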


\smallskip

The latter estimates have been obtained for the regular case ($\a_j=0$ for all $j$), but they continue to hold true  with the singular weight $\widehat h$ as far as the bubbles are centered away from the singular points $\{p_1,\dots,p_N\}$. 

We will then need the following result. 

\smallskip

\begin{lem} \label{lem:est2}
Suppose $\rho_1\leq\rho_2$. Then, for any $F\in\ov H^1(M)$ we have
$$
	\wtilde J_\rho(F) \leq \int_M |\n F|^2 \,dV_g -2\rho_1 \log \int_M \widehat h\,\re^F\,dV_g+C,
$$
where $C>0$ depends only on $h_1, h_2$, $\rho_1, \rho_2$ and $M$.
\end{lem}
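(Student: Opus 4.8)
The plan is to estimate $\wtilde J_\rho(F)=\int_M |\n F|^2\,dV_g-I_\rho\bigl(F,\wtilde G(F)\bigr)$ from above. By the minimality \eqref{min} we have $I_\rho\bigl(F,\wtilde G(F)\bigr)=\min_G I_\rho(F,G)$, so it suffices to bound $I_\rho(F,G)$ from \emph{below} uniformly in $G\in\ov H^1(M)$ and then evaluate at the minimizer. First I would discard the nonnegative term $\int_M|\n G|^2\,dV_g$ and replace $\wtilde h_i$ by the singular weight $\widehat h$ at the cost of an additive constant, using $\frac1C\leq h_i\leq C$; this reduces the problem to bounding $\rho_2\log\int_M\widehat h\,\re^{F+G}\,dV_g+\rho_1\log\int_M\widehat h\,\re^{F-G}\,dV_g$ from below.

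The key step is to write $\rho_2=\rho_1+(\rho_2-\rho_1)$, where $\rho_2-\rho_1\geq 0$ by the hypothesis $\rho_1\leq\rho_2$, and to exploit the pointwise identity $\widehat h\,\re^{F}=\bigl(\widehat h\,\re^{F+G}\bigr)^{1/2}\bigl(\widehat h\,\re^{F-G}\bigr)^{1/2}$. By the Cauchy--Schwarz inequality this gives
\[
\int_M\widehat h\,\re^{F}\,dV_g\leq\Bigl(\int_M\widehat h\,\re^{F+G}\,dV_g\Bigr)^{1/2}\Bigl(\int_M\widehat h\,\re^{F-G}\,dV_g\Bigr)^{1/2},
\]
so that $\log\int_M\widehat h\,\re^{F+G}\,dV_g+\log\int_M\widehat h\,\re^{F-G}\,dV_g\geq 2\log\int_M\widehat h\,\re^{F}\,dV_g$. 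The symmetric part $\rho_1[\,\cdots]$ therefore produces exactly the desired term $2\rho_1\log\int_M\widehat h\,\re^F\,dV_g$.

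It remains to control the leftover $(\rho_2-\rho_1)\log\int_M\widehat h\,\re^{F+G}\,dV_g$. Here I would apply Jensen's inequality with respect to the (probability) volume measure, since $|M|=1$:
\[
\log\int_M\widehat h\,\re^{F+G}\,dV_g\geq\int_M\log\widehat h\,dV_g+\int_M(F+G)\,dV_g=\int_M\log\widehat h\,dV_g,
\]
the last average vanishing because $F,G\in\ov H^1(M)$. Since $\log\widehat h=-4\pi\sum_j\a_j G_{p_j}$ has at worst logarithmic singularities (from the Green functions) and $\a_j\geq 0$, the integral $\int_M\log\widehat h\,dV_g$ is a finite constant; multiplying by $\rho_2-\rho_1\geq 0$ contributes only an additive constant. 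Combining the three steps yields $I_\rho(F,G)\geq 2\rho_1\log\int_M\widehat h\,\re^F\,dV_g-C$ for every $G$, hence for $G=\wtilde G(F)$, which is equivalent to the assertion.

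The main obstacle --- really the only subtle point --- is the sign bookkeeping in the leftover term: the hypothesis $\rho_1\leq\rho_2$ is exactly what makes $\rho_2-\rho_1\geq 0$, so that the one-sided Jensen bound points in the correct direction. Without this sign one would instead need an \emph{upper} bound on $\log\int_M\widehat h\,\re^{F+G}\,dV_g$, which cannot hold uniformly in $G$. I would also verify carefully that $\int_M\log\widehat h\,dV_g$ is genuinely finite, i.e. that the logarithmic singularities of the $G_{p_j}$ remain integrable after weighting by $\a_j\geq 0$.
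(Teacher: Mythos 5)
Your proof is correct and follows essentially the same route as the paper's: both evaluate $I_\rho$ at (or below) the minimizer, drop the nonnegative term $\int_M|\n G|^2\,dV_g$, absorb $h_1,h_2$ into the constant, use Jensen's inequality (with the sign $\rho_2-\rho_1\geq 0$) on the excess term, and apply H\"older/Cauchy--Schwarz to $\widehat h\,\re^F=\bigl(\widehat h\,\re^{F+G}\bigr)^{1/2}\bigl(\widehat h\,\re^{F-G}\bigr)^{1/2}$. Your final worry is easily settled: since $\int_M G_{p_j}\,dV_g=0$ by the normalization of the Green function, in fact $\int_M\log\widehat h\,dV_g=-4\pi\sum_j\a_j\int_M G_{p_j}\,dV_g=0$, so the leftover Jensen term vanishes exactly.
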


\begin{proof}
Since $F,\wtilde G \in \ov H^1(M)$, by Jensen's inequality, it holds that
\begin{align} \label{est}
\begin{split}
		\wtilde J_\rho(F) & = \int_M |\n F|^2 \,dV_g- \int_M |\n \wtilde G|^2 \,dV_g 	\\
		                  & \quad -\rho_2 \log\int_M \wtilde h_2\re^{F+\wtilde G}\,dV_g -\rho_1 \log \int_M  \wtilde h_1\re^{F-\wtilde G}\,dV_g \\
						& \leq \int_M |\n F|^2 \,dV_g -\rho_1 \left(\log\int_M \widehat h\,\re^{F+\wtilde G}\,dV_g + \log \int_M  \widehat h\,\re^{F-\wtilde G}\,dV_g\right) + C, 
\end{split}
\end{align}
where $C>0$ depends only on $h_1, h_2$, $\rho_1, \rho_2$ and $M$. Next, by H\"{o}lder's inequality one has
$$
	\int_M \widehat h\,\re^F\,dV_g = \int_M \widehat h\,\re^{\frac{F+\wtilde G}{2}} \re^{\frac{F-\wtilde G}{2}}\,dV_g \leq \left( \int_M \widehat h\,\re^{F+\wtilde G}\,dV_g  \right)^{\frac 12} \left( \int_M \widehat h\,\re^{F-\wtilde G}\,dV_g  \right)^{\frac 12},
$$
and thus, we deduce
$$
	2\log \int_M \widehat h\,\re^F\,dV_g \leq \log \int_M \widehat h\,\re^{F+\wtilde G}\,dV_g + \log \int_M \widehat h\,\re^{F-\wtilde G}\,dV_g.
$$
Using the latter estimate in \eqref{est} one gets
$$
	\wtilde J_\rho(F) \leq \int_M |\n F|^2 \,dV_g -2\rho_1 \log \int_M \widehat h\,\re^F\,dV_g+C.
$$
\end{proof}

With this in hand, we can construct the desired map as follows.

\smallskip

\begin{pro} \label{pro:test}
Suppose $\rho_1,\rho_2>8k\pi$ and let $K$ be a compact subset of $M\setminus\{p_1,\dots,p_N\}$, $K_k\subset M_k$ be the formal barycenters supported in $K$. For any $L>0$ sufficiently large there exists a map $\Phi: K_k\to \wtilde J_\rho^{-L}$.
\end{pro}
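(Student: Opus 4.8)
The plan is to define $\Phi$ directly through the bubble test functions $\varphi_{\l,\s}$ of Lemma~\ref{lem:test}, suitably normalized to zero average. First I would reduce to the case $\rho_1\le\rho_2$. Indeed, writing $J_{\rho'}(F,-G)=J_\rho(F,G)$ with $\rho'=(\rho_2,\rho_1)$, $\wtilde h'=(\wtilde h_2,\wtilde h_1)$, one checks that the exchange $(\rho_1,\wtilde h_1)\leftrightarrow(\rho_2,\wtilde h_2)$ together with $G\mapsto -G$ leaves the constrained functional invariant, so $\wtilde J_{\rho'}=\wtilde J_\rho$. Hence I may assume $\rho_1=\min\{\rho_1,\rho_2\}\le\rho_2$, which is exactly the hypothesis under which the upper bound of Lemma~\ref{lem:est2} holds. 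For $\s\in K_k$ and $\l>0$ I set $F_{\l,\s}=\varphi_{\l,\s}-\ov{\varphi}_{\l,\s}\in\ov H^1(M)$, the zero-average normalization of \eqref{test}; since the bubbles are centered in the compact set $K\subset M\setminus\{p_1,\dots,p_N\}$, the weight $\widehat h$ stays comparable to a constant on their supports and the asymptotics of Lemma~\ref{lem:test} apply.

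Substituting these asymptotics into Lemma~\ref{lem:est2} is the heart of the argument. As $F_{\l,\s}$ differs from $\varphi_{\l,\s}$ only by a constant, one has $\int_M|\n F_{\l,\s}|^2\,dV_g=\int_M|\n\varphi_{\l,\s}|^2\,dV_g=32k\pi(1+o_\l(1))\log\l$, while $\log\int_M\widehat h\,\re^{F_{\l,\s}}\,dV_g=2(1+o_\l(1))\log\l$. Lemma~\ref{lem:est2} then yields
\[
 \wtilde J_\rho(F_{\l,\s})\le \bigl(32k\pi-4\rho_1\bigr)(1+o_\l(1))\log\l+C .
\]
The assumption $\rho_1>8k\pi$ forces the leading coefficient $32k\pi-4\rho_1$ to be strictly negative, so $\wtilde J_\rho(F_{\l,\s})\to-\infty$ as $\l\to+\infty$. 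Given $L>0$, I then fix $\l=\l(L)$ so large that $\wtilde J_\rho(F_{\l,\s})\le -L$ and define $\Phi(\s)=F_{\l,\s}$, which takes values in $\wtilde J_\rho^{-L}$.

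It remains to verify that $\Phi$ is continuous from $K_k$ into $\wtilde J_\rho^{-L}$. At fixed $\l$ this reduces to the continuity of the map $\s\mapsto\varphi_{\l,\s}$ into $\ov H^1(M)$, which is immediate from the explicit dependence of \eqref{test} on the weights $t_i$ and the points $x_i\in K$, combined with continuity of the averaging operation $\varphi\mapsto\varphi-\ov{\varphi}$. Continuity of $\wtilde G$ from the preliminary lemma then guarantees that $\wtilde J_\rho\circ\Phi$ is continuous, so the sublevel constraint is preserved along $K_k$.

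The main obstacle I anticipate is the uniformity of the error term $o_\l(1)$ over the whole family $K_k$, rather than for a single $\s$: this is precisely what permits one choice of $\l$ to push the entire family below $-L$. This uniformity follows from the compactness of $K_k$ (the set of probability measures supported in the compact set $K$) together with its positive distance from $\{p_1,\dots,p_N\}$, which keeps $\widehat h$ bounded above and below on the bubble supports uniformly in $\s$. Once this is granted, the remaining steps are the routine substitutions indicated above.
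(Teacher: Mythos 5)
Your proposal is correct and takes essentially the same route as the paper's own proof: you plug the zero-average bubbles $\varphi_{\l,\s}-\ov{\varphi}_{\l,\s}$ into the upper bound of Lemma \ref{lem:est2} using the asymptotics of Lemma \ref{lem:test}, obtaining $\wtilde J_\rho(\Phi(\s))\le 4\bigl(8k\pi-\rho_1+o_\l(1)\bigr)\log\l\le -L$ uniformly on $K_k$ thanks to $\rho_1>8k\pi$. Your explicit symmetry argument ($\rho_1\leftrightarrow\rho_2$, $\wtilde h_1\leftrightarrow\wtilde h_2$, $G\mapsto -G$ leaving $\wtilde J_\rho$ invariant) justifying the reduction to $\rho_1\le\rho_2$, and your continuity check for $\s\mapsto\varphi_{\l,\s}$, are correct elaborations of points the paper leaves implicit.
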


\begin{proof}
Fixing $L>0$ we set $\Phi: K_k\to \wtilde J_\rho^{-L}$ as
\be \label{phi}
	\Phi(\s)= \varphi_{\l,\s}-\ov{\varphi}_{\l,\s}, \quad \s\in K_k,
\ee
for a suitable $\l>0$ large enough, where $\varphi_{\l,\s}$ is given in \eqref{test}. Indeed, it is enough to prove that $\wtilde J_\rho(\Phi(\s))\leq-L$ uniformly in $K_k$. Without loss of generality, suppose that $\rho_1\leq\rho_2$. 
By first applying Lemma \ref{lem:est2} and then using the estimates of Lemma \ref{lem:test}, we deduce
\begin{align*}
	\wtilde J_\rho(\Phi(\s)) & \leq \int_M |\n \varphi_{\l,\s}|^2 \,dV_g -2\rho_1 \log \int_M \widehat h\,\re^{\varphi_{\l,\s}-\ov{\varphi}_{\l,\s}}\,dV_g +C\\
	&\\
	                         & \leq 4 (8k\pi-\rho_1+o_\l(1))\log\l \leq -L,
\end{align*}
for $\l>0$ sufficiently large, where in the last inequality we have used the fact that $\rho_1>8k\pi$.
\end{proof}

\medskip

We have now all the ingredients to prove the main existence results in Theorem~\ref{thm1}. We will treat the zero and positive genus case separately.

\medskip

\subsection{The zero genus case} Let us start with the first part of Theorem~\ref{thm1}, the case $M=\mathbb{S}^2$ with $\alpha_j=0$ for all $j$.

\medskip

\begin{proof}[Proof of \textbf{1.} in Theorem \ref{thm1}.]
We recall that we have $(\rho_1,\rho_2)\notin\L$ with
\begin{align*}
 \rho_1,\rho_2>8k\pi, \quad \frac{\rho_1+\rho_2}{2} < 8(k+1)\pi.
\end{align*}
The goal is to apply Lemma \ref{lem:def}. Now, by Proposition \ref{pro:proj}, there exists a projection $\Psi: \wtilde J_\rho^{-L}\to (\mathbb{S}^2)_k$, for some sufficiently large $L>0$.
Since we are in the regular case, Proposition \ref{pro:test} can be applied directly with $K=\mathbb{S}^2$, and thus, there exists a map $\Phi: (\mathbb{S}^2)_k\to \wtilde J_\rho^{-L}$, see \eqref{phi}. 
By construction of such maps and by the property \eqref{conv}, it is not difficult to show that the composition 
\begin{align*}
	&(\mathbb{S}^2)_k \stackrel{\Phi}{\longrightarrow} \wtilde J_\rho^{-L} \stackrel{\Psi}{\longrightarrow} (\mathbb{S}^2)_k \\
	&\s\mapsto (\varphi_{\l,\s}-\ov{\varphi}_{\l,\s}) \mapsto \frac{\re^{\varphi_{\l,\s}}}{\int_{\mathbb{S}^2} \re^{\varphi_{\l,\s}}} \simeq \s
\end{align*}
is homotopic to the identity on $(\mathbb{S}^2)_k$. Here, the homotopy being achieved by letting $\l\to+\infty$; see, for example, Proposition 4.4 in \cite{mal2}. Passing to the induced maps $\Phi^*,\Psi^*$ between homology groups, we derive $\Psi^*\circ\Phi^*=\mbox{Id}_{(\mathbb{S}^2)_k}^*$. 
In particular, the $q$-th homology groups of $(\mathbb{S}^2)_k$ are mapped injectively into the $q$-th homology groups of $\wtilde J_\rho^{-L}$, i.e.
$$
	H_q((\mathbb{S}^2)_k) \hookrightarrow H_q(\wtilde J_\rho^{-L}), \qquad q \ge 0.
$$  
Since it is well-known that $(\mathbb{S}^2)_k$ is not contractible (we refer the interested readers to Lemma 4.1. in \cite{mal2}), we conclude that $\wtilde J_\rho^{-L}$ is not contractible as well. On the other hand, $\wtilde J_\rho^{L}$ is contractible by Proposition \ref{pro:contr}. The existence of a solution to \eqref{system} is then guaranteed by Lemma \ref{lem:def}.
\end{proof}

\medskip

\subsection{The positive genus case} We next move to the second part of Theorem~\ref{thm1} so we consider a surface $M$ with genus $\textbf{g}>0$ and $\a_j\geq0$ for all $j$. To get multiplicity of solutions we will need weak Morse inequalities which we recall here. 

\medskip

For $q\in \N$ and a topological space $X$, we will denote by $H_q(X)$ its $q$-th homology group with
coefficient in $\Z$. For a subspace $A\subseteq X$, we write $H_q(X,A)$ for the $q$-th relative homology group of $(X,A)$. We will denote by $\wtilde H_q(X)$ the reduced $q$-th homology group, i.e. $H_0(X)\cong\wtilde H_0(X) \oplus \Z$ and $H_q(X)\cong\wtilde H_q(X)$ for all $q>0$. The $q$-th Betti number of $X$ will be indicated by $\b_q(X)= \mbox{rank } (H_q(X))$, while $\wtilde\b_q(X)$ will correspond to the rank of the reduced homology group.

\medskip

Letting $N$ be a Hilbert manifold, we recall that a function $f\in C^2(N,\R)$ is called a Morse function if all its critical points are non-degenerate. 
For $a<b$, we set
$$
	\begin{array}{c}
		C_q(a,b) = \# \bigr\{  \mbox{critical points of $f$ in $\{a \leq f \leq b\}$ with index $q$}  \bigr\}, \vspace{0.3cm} \\ 
		\b_q(a,b) = \mbox{rank } \bigr( H_q \bigr( \{ f \leq b \}, \{ f \leq a \} \bigr) \bigr).
	\end{array}
$$
For a proof of the following result, we refer, for example, to Theorem 4.3 in \cite{ch}.

\smallskip

\begin{pro} \label{pro:morse}
Let $N$ be a Hilbert manifold, and let $f\in C^2(N,\R)$ be a Morse function satisfying the Palais-Smale condition. Let $a<b$ be regular values of $f$. Then,
$$
	C_q(a,b) \geq \b_q(a,b), \qquad  q\ge 0.
$$
\end{pro}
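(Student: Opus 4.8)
The plan is to prove the weak Morse inequalities by the classical deformation-theoretic argument, reducing everything to a local computation at each critical value together with an algebraic subadditivity of ranks along a filtration by sublevel sets. First I would use the Palais-Smale condition to control the critical set: since $f$ is Morse, its critical points are non-degenerate and hence isolated, and if the slab $\{a\leq f\leq b\}$ contained infinitely many of them, they would yield a sequence with $f$ bounded and $\|\n f\|\to 0$, whose Palais-Smale limit would be a non-isolated critical point, a contradiction. Thus there are finitely many critical values $c_1<\dots<c_m$ in $(a,b)$; picking regular values $a=a_0<a_1<\dots<a_m=b$ with $a_{i-1}<c_i<a_i$ and writing $X_i=\{f\leq a_i\}$ produces a filtration $X_0\subseteq X_1\subseteq\dots\subseteq X_m$ with $X_0=\{f\leq a\}$ and $X_m=\{f\leq b\}$.

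The core step is the local analysis at each $c_i$. Here I would invoke the Morse-Palais lemma to put $f$ into the normal form $f=c_i-\|\xi^-\|^2+\|\xi^+\|^2$ in a neighborhood of each critical point, and then use the second deformation lemma (whose validity rests precisely on the Palais-Smale condition) to retract $X_i$ onto $X_{i-1}$ with one cell of dimension equal to the Morse index attached at each critical point at level $c_i$. Standard excision then gives that $H_q(X_i,X_{i-1})$ is free abelian of rank equal to the number of critical points of index $q$ at level $c_i$. Summing over $i$, and recalling the definition of $C_q(a,b)$, yields
$$
\sum_{i=1}^m \mathrm{rank}\,H_q(X_i,X_{i-1}) = C_q(a,b).
$$

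It remains to compare $\b_q(a,b)=\mathrm{rank}\,H_q(X_m,X_0)$ with this sum, which is pure homological algebra. For any triple $A\subseteq B\subseteq C$, the long exact sequence of the triple gives the subadditivity $\mathrm{rank}\,H_q(C,A)\leq \mathrm{rank}\,H_q(B,A)+\mathrm{rank}\,H_q(C,B)$, since the image of $H_q(B,A)\to H_q(C,A)$ is the kernel of $H_q(C,A)\to H_q(C,B)$. Iterating this inequality along the filtration $X_0\subseteq\dots\subseteq X_m$ produces
$$
\b_q(a,b)=\mathrm{rank}\,H_q(X_m,X_0)\leq \sum_{i=1}^m \mathrm{rank}\,H_q(X_i,X_{i-1})=C_q(a,b),
$$
which is exactly the asserted inequality. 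The hard part will be the local computation in the second step: in the infinite-dimensional Hilbert-manifold setting one must carefully apply the Morse-Palais normal form and, above all, exploit the Palais-Smale condition in the deformation lemma to isolate the topological effect of each critical point as the attachment of cells of the corresponding index, the finiteness of critical points and the algebraic subadditivity being comparatively routine.
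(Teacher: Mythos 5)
Your proof is correct and follows essentially the same route as the paper, which offers no argument of its own but defers to Theorem 4.3 in \cite{ch}: there too one uses the Palais--Smale condition to get finitely many (isolated, non-degenerate) critical points in the slab, the Morse--Palais normal form together with a deformation lemma to identify $H_q(X_i,X_{i-1})$ with the count of index-$q$ critical points at each level, and rank subadditivity from the exact sequence of a triple along the filtration. The only difference is cosmetic: the cited source derives the strong Morse inequalities and deduces the weak ones, whereas you obtain the weak inequalities directly by iterating subadditivity, which is slightly more economical for the statement at hand.
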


\smallskip

We point out that the Palais-Smale condition is not required here; it can be replaced by suitable deformation lemmas for $f$ (see Lemma 3.2 and Theorem 3.2 in \cite{ch}), which in our case follow by the arguments in \cite{mal}. 
On the other hand, one can apply a transversality result from \cite{sa-te} (see, e.g., Theorem 1.5 in \cite{demar}) to conclude that, generically, all critical points of $\wtilde J_\rho$ are non-degenerate in the following sense.

\smallskip

\begin{lem} \label{lem:non-deg}
Suppose $\rho=(\rho_1,\rho_2)\notin \L$. Then, for $(g,h_1,h_2)$ in an open dense subset of $\M^2 \times C^{\,2}(M)\times C^{\,2}(M)$, $\wtilde J_\rho$ is a Morse function.
\end{lem}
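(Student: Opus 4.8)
The plan is to interpret the Morse condition on $\wtilde J_\rho$ as the statement that $0$ is a regular value of a suitable map depending on the data $(g,h_1,h_2)$, and then to invoke the abstract transversality result of \cite{sa-te}, in the form of Theorem~1.5 in \cite{demar}. The first step is to reduce the non-degeneracy of critical points of $\wtilde J_\rho$ to that of the system \eqref{system2}. Since $\wtilde G(F)$ is characterized by $D_GI_\rho(F,\wtilde G(F))=0$ and $D_{GG}I_\rho$ is invertible (being bounded below by the Dirichlet form, as shown in the first lemma of this section), a function $F$ is a non-degenerate critical point of $\wtilde J_\rho$ if and only if $(F,\wtilde G(F))$ is a non-degenerate critical point of $J_\rho$: the Hessian of $\wtilde J_\rho$ is exactly the Schur complement of the Hessian of $J_\rho$ with respect to the invertible $G$-block, so the two are simultaneously (non)degenerate. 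Undoing the change of variables $u_1=F-G$, $u_2=F+G$, this means precisely that the linearization of \eqref{system2} at the corresponding solution $(u_1,u_2)$ has trivial kernel in $\ov H^1(M)\times\ov H^1(M)$. Thus $\wtilde J_\rho$ is a Morse function if and only if every solution of \eqref{system} is non-degenerate.

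Next I would set up the parametrized map. Let $P=\M^2\times C^2(M)\times C^2(M)$, an open subset of a separable Banach space, and define $\mathcal F\colon \ov H^1(M)^2\times P\to \ov H^1(M)^2$ by $\mathcal F(u;p)=u-(-\Delta_g)^{-1}N_p(u)$, where $N_p(u)$ collects the two right-hand sides of \eqref{system2} (both have zero average, so $(-\Delta_g)^{-1}$ is well defined on them). Zeros of $\mathcal F(\cdot,p)$ are exactly the solutions of \eqref{system2} with data $p$. Because the nonlinearity is of exponential type it is compact by Moser--Trudinger, so $D_u\mathcal F(\cdot,p)$ is a compact perturbation of the identity, hence Fredholm of index $0$; a solution is non-degenerate precisely when $D_u\mathcal F$ has trivial kernel.

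The heart of the argument is to check that $0$ is a regular value of the full map $\mathcal F$, i.e.\ that $D\mathcal F(u;p)$ is onto at every zero. As $D_u\mathcal F$ is Fredholm of index $0$, it suffices to show that no nonzero cokernel element $(v_1,v_2)$ of $D_u\mathcal F(u;p)$ annihilates the image of the data-derivative $D_p\mathcal F(u;p)$; for this I would vary only $h_1,h_2$ (keeping $g$ fixed), which already provides enough freedom. A perturbation $\delta h_2$ enters only the first equation, and its pairing with the cokernel reduces to an integral of $\delta h_2$ against a function built from $v_1$, $\re^{u_2}$ and the strictly positive weight $\widehat h$ on $M\setminus\{p_1,\dots,p_N\}$; symmetrically $\delta h_1$ pairs against a function built from $v_2$. \textbf{This is the main obstacle:} one must rule out that all such pairings vanish when $(v_1,v_2)\neq 0$. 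The key is unique continuation for the adjoint linearized elliptic system, which forces a nonvanishing component not to vanish on any open subset of $M\setminus\{p_1,\dots,p_N\}$; choosing $\delta h_i$ a bump supported where that component is nonzero then makes the pairing nonzero. The nonlocal terms coming from the denominators $\int_M\widehat h\,\re^{u_j}\,dV_g$ and the zero-average projection contribute only finite-rank corrections and can be absorbed without obstructing this construction.

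Having verified surjectivity of $D\mathcal F$, the transversality theorem yields that the set $\mathcal G\subseteq P$ of data for which $0$ is a regular value of $\mathcal F(\cdot,p)$, equivalently for which $\wtilde J_\rho$ is a Morse function, is residual and in particular dense. Finally I would upgrade this to \emph{open} dense using Proposition~\ref{pro:comp}: since $(\rho_1,\rho_2)\notin\L$, solutions of \eqref{system} are uniformly bounded in $C^2(M)$, uniformly for $p$ in a neighborhood, so the solution set is compact. A standard compactness-plus-implicit-function-theorem argument then shows that $\mathcal G$ is open, since non-degenerate solutions persist under small perturbations of $p$ and no degenerate solution can appear in a limit without contradicting non-degeneracy at the limiting data. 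Being open and dense, $\mathcal G$ is the desired open dense subset, which proves the claim.
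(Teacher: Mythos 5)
Your proposal is correct and takes essentially the same approach as the paper: the paper establishes this lemma simply by invoking the Saut--Temam transversality theorem in the form of Theorem~1.5 of \cite{demar}, and your argument --- reducing the Morse property of $\wtilde J_\rho$ to non-degeneracy of solutions of \eqref{system2} via the Schur complement in the invertible $G$-block, verifying that $0$ is a regular value of the parametrized map by varying $h_1,h_2$ against cokernel elements, and upgrading from residual to open dense using the compactness of Proposition~\ref{pro:comp} --- is precisely the standard implementation of that citation. One minor remark: unique continuation is more than is needed here, since $\widehat h\,\re^{u_j}>0$ away from finitely many points, so vanishing of all pairings already forces the relevant cokernel component to equal its weighted mean, i.e.\ a constant, which is zero in $\ov H^1(M)$.
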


\smallskip

We will say that the above property holds for a generic choice of $(g,h_1,h_2)$. We start by observing that by Proposition \ref{pro:contr} and by long exact sequence of the relative homology, one has
$$
	H_{q+1}\left( J_\rho^L, J_\rho^{-L} \right) \cong \wtilde H_q \left(J_\rho^{-L}\right), \quad q\geq 0.
$$
Therefore, Proposition \ref{pro:morse} and Lemma \ref{lem:non-deg} yield  the following result.

\smallskip

\begin{lem} \label{lem:low-sub}
Suppose $\rho=(\rho_1,\rho_2)\notin \L$. Then, there exists $L>0$ such that for a generic choice of $(g,h_1,h_2)$ we have
$$
\#\bigr\{ \mbox{solutions of \eqref{system}} \bigr\} \geq \wtilde \b_q\left(J_\rho^{-L}\right), \quad q\in\N.
$$
\end{lem}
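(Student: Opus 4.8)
The plan is to derive the estimate from the weak Morse inequalities of Proposition~\ref{pro:morse}, combined with the relative-homology identification recorded just above the statement. The starting observation is that critical points of the constrained functional $\wtilde J_\rho$ are in one-to-one correspondence with solutions of \eqref{system}, and that, by Lemma~\ref{lem:non-deg}, for a generic choice of $(g,h_1,h_2)$ the functional $\wtilde J_\rho$ is a Morse function. Since the Palais--Smale condition is not available, I would invoke the remark following Proposition~\ref{pro:morse}: the weak Morse inequalities remain valid once Palais--Smale is replaced by the deformation lemmas of \cite{mal}, whose hypotheses were verified earlier through the compactness of $I_\rho\bigl(F,\wtilde G(F)\bigr)$.

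First I would fix $L>0$ large. By the compactness Proposition~\ref{pro:comp} all solutions of \eqref{system} are uniformly bounded in $C^2(M)$, so the associated critical points of $\wtilde J_\rho$ and their critical values are uniformly bounded; enlarging $L$ if necessary, I may therefore assume that every critical point lies in the band $\{-L\le \wtilde J_\rho\le L\}$ and that $\pm L$ are regular values (possible since the solution set is compact by Proposition~\ref{pro:comp} and non-degenerate critical points are isolated, hence finite, so only finitely many critical values occur). In particular $\wtilde J_\rho$ has no critical point outside this band, so the total number of solutions equals $\sum_{q\ge 0}C_q(-L,L)$ and, a fortiori, dominates each individual $C_q(-L,L)$.

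Next I would apply Proposition~\ref{pro:morse} on $[a,b]=[-L,L]$ to get $C_q(-L,L)\ge \b_q(-L,L)=\mbox{rank}\,H_q\bigl(\wtilde J_\rho^{L},\wtilde J_\rho^{-L}\bigr)$ for every $q$, and then substitute the isomorphism $H_{q+1}\bigl(\wtilde J_\rho^{L},\wtilde J_\rho^{-L}\bigr)\cong\wtilde H_q\bigl(\wtilde J_\rho^{-L}\bigr)$ established above as a consequence of the contractibility of $\wtilde J_\rho^{L}$ from Proposition~\ref{pro:contr} and the long exact sequence of the pair. Chaining these gives
$$
\#\bigl\{\mbox{solutions of \eqref{system}}\bigr\}\ \ge\ C_{q+1}(-L,L)\ \ge\ \b_{q+1}(-L,L)\ =\ \wtilde\b_q\bigl(\wtilde J_\rho^{-L}\bigr),\qquad q\in\N,
$$
which is the desired inequality (the index shift being harmless, as $q$ ranges over all of $\N$). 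The genuine difficulty here is not the homological bookkeeping, which becomes routine once $L$ is fixed, but the preliminary point of running Morse theory without Palais--Smale: one must carry the deformation lemmas of \cite{mal} into the framework of \cite{ch} and combine them with the generic non-degeneracy from Lemma~\ref{lem:non-deg}, so that the absence of a standard compactness condition for $\wtilde J_\rho$ is compensated.
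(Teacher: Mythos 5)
Your proof is correct and follows essentially the same route as the paper, which likewise combines Proposition~\ref{pro:morse} (with Palais--Smale replaced by the deformation lemmas of \cite{mal}), the generic non-degeneracy of Lemma~\ref{lem:non-deg}, and the isomorphism $H_{q+1}\bigl(\wtilde J_\rho^{L},\wtilde J_\rho^{-L}\bigr)\cong\wtilde H_q\bigl(\wtilde J_\rho^{-L}\bigr)$ obtained from Proposition~\ref{pro:contr} and the long exact sequence of the pair. Your extra care in choosing $L$ so that, via Proposition~\ref{pro:comp}, all critical values lie in $(-L,L)$ with $\pm L$ regular simply makes explicit what the paper leaves implicit.
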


\smallskip

We can now prove the main result.

\medskip

\begin{proof}[Proof of \textbf{2.} in Theorem \ref{thm1}.]
Recall that $(\rho_1,\rho_2)\notin\L$ with
\begin{align*}
 \rho_1,\rho_2>8k\pi, \quad \frac{\rho_1+\rho_2}{2} < 8(k+1)\pi.
\end{align*}
By Lemma \ref{lem:low-sub}, it remains to estimate the rank $\wtilde \b_q\left(J_\rho^{-L}\right)$ of the homology of low sublevel sets.
To avoid the effect of the singular points $p_j$, we follow the argument in \cite{BdMM}. We first recall that a bouquet $\mathcal B_m$ of $m$ circles is defined as $\mathcal B_m = \cup_{i=1}^m \mathcal S_i$, where $\mathcal S_i$ is homeomorphic to $\mathbb{S}^1$ and $\mathcal S_i \cap \mathcal S_j = \{p\}$. Since $M$ has positive genus $\textbf{g}>0$, it is easy to show (see the proof of Proposition 3.1 in \cite{BdMM}) that there exists a bouquet $\mathcal B_{\textbf{g}}\subseteq M$ of $\textbf{g}$ circles such that
\begin{itemize}
\item[(i)] $\mathcal B_{\textbf{g}}\cap\{p_1,\dots,p_N\}=\emptyset$,

\item[(ii)] there exists a global retraction $\Pi:M\to\mathcal B_{\textbf{g}}$.
\end{itemize}
Now, by Proposition \ref{pro:proj}, there exists a projection $\Psi: \wtilde J_\rho^{-L}\to M_k$, for some sufficiently large $L>0$. 
Therefore, we can define a map $\Psi_{\Pi}: \wtilde J_\rho^{-L}\to (\mathcal B_{\textbf{g}})_k$ through the composition 
$$
	\wtilde J_\rho^{-L} \stackrel{\Psi}{\longrightarrow} M_k \stackrel{\Pi}{\longrightarrow} (\mathcal B_{\textbf{g}})_k,
$$
where, with a little abuse of notation, we still denote by $\Pi$ the map between the formal barycenters induced by the retraction in the above point (ii). Next, by the point (i), we can apply Proposition \ref{pro:test} with $K=\mathcal B_{\textbf{g}}$ and get a map $\Phi: (\mathcal B_{\textbf{g}})_k\to \wtilde J_\rho^{-L}$, see \eqref{phi}. Arguing as in the proof of the first part of Theorem \ref{thm1}, it is easy to show that $\Psi_{\Pi}^*\circ\Phi^*=\mbox{Id}_{(\mathcal B_{\textbf{g}})_k}^*$, and in particular
$$
	H_q((\mathcal B_{\textbf{g}})_k) \hookrightarrow H_q(\wtilde J_\rho^{-L}).
$$  
The rank of $H_{2k-1}\bigl((\mathcal B_{\mathbf g})_k\bigr)$ was computed in Proposition 3.2 of \cite{BdMM}; more precisely,
$$
\beta_{2k-1}\bigl((\mathcal B_{\mathbf g})_k\bigr)=\binom{k+\mathbf g-1}{\mathbf g-1}.
$$
Thus, we deduce that 
$$
	\b_{2k-1}\left(J_\rho^{-L}\right)\geq \binom{k+\emph{\textbf{g}}-1}{\emph{\textbf{g}}-1}
$$
and the conclusion follows by Lemma \ref{lem:low-sub}.
\end{proof}

\noindent {\bf  Acknowledgements.} 
S.-H Moon was  supported   by the National Research Foundation of Korea(NRF) grant funded by the Korea government(MSIT) (No. RS-2022-NR072398, \ No. RS-2022-NR070702).

\medskip

\noindent {\bf Data Availability} Data sharing is not applicable to this article as no new data were created or analyzed in this study.

\medskip

\noindent {\bf Conflict of Interest} The authors declare that they have no conflict of interest.

\

\end{document}